\theoremstyle{plain}
\newtheorem*{acknowledgement}{Acknowledgement}
\newtheorem{corollary}{\bf Corollary}
\newtheorem{definition}{\bf Definition}
\newtheorem{lemma}{\bf Lemma}
\newtheorem{proposition}{\bf Proposition}
\newtheorem{theorem}{\bf Theorem}
\newtheorem{conjecture}{\bf Conjecture}
\theoremstyle{definition}
\numberwithin{equation}{section}
\title[Critical Spaces]{On Static Manifolds and Related Critical Spaces with cyclic parallel Ricci tensor}
\author{H. Baltazar}
\author{A. Da Silva}
\address[H. Baltazar]{Departamento de Matem\'{a}tica, Universidade Federal do Piau\'{\i}\\
64049-550 Te\-re\-si\-na, Piau\'{\i}, Brazil.}
\email{halyson@ufpi.edu.br}
\address[A. da Silva]{Faculdade de Matem\'{a}tica, Universidade Federal do Par\'{a}\\
	66075-110 Be\-l\'{e}m, Par\'{a}, Brazil.}
\email{adamsilva@ufpa.br}
\subjclass[2010]{Primary 53C24, 53C25; Secondary 53C65}
\keywords{Volume functional; critical metrics; cyclic parallel Ricci tensor}
\begin{document}

\newcommand{\spacing}[1]{\renewcommand{\baselinestretch}{#1}\large\normalsize}
\spacing{1.2}

\begin{abstract}
The aim of this paper is to classify three dimensional compact Riemannian manifolds $(M^{3},g)$ that admits a non-constant solution to the equation
$$-\Delta f g+Hess f-fRic=\mu Ric+\lambda g,$$
for some special constants $(\mu, \lambda)$, under assumption that the manifold has cyclic parallel Ricci tensor. Namely, the structures that we will study here will be: positive static triples, critical metrics of the volume functional, and critical metrics of the total scalar curvature functional. We shall also classify $n$-dimensional critical metrics of the volume functional with non-positive scalar curvature and satisfying the cyclic parallel Ricci tensor condition.
\end{abstract}

\maketitle

\section{Introduction}
\label{intro}

The theory of critical metrics is a classical subject in differential geometry. The main reason is the direct relation with physical problems, for instance, it is well know that, considering a suitable functional, the equations of classical mechanics can be obtained via variational methods (see \cite{AbrMar,ARN}). Moreover, as another application, Hilbert in \cite{hilbert} gave a different perspective to understand the equations of general relativity, more precisely, working with total scalar curvature functional his was able to show that such equations can be recovered from the action of that functional. In particular, critical metrics of the Einstein-Hilbert functional, as it is also known, are in fact Einstein manifolds. This result becomes more refined if we consider a more restrictive space in order to obtain the same response, namely, the space of all metrics with constant scalar curvature and unit volume. It has been conjectured that the solutions of such critical point equation are Einstein metrics. Investigating this conjecture will be one of the main goal of our work.

Before to proceed it is fundamental to remember the following 3-tensor in terms of the Ricci tensor
\begin{equation}\label{auxD}
D(X,Y,Z)=\nabla_{X}Ric(Y,Z)+\nabla_{Y}Ric(Z,X)+\nabla_{Z}Ric(X,Y),
\end{equation}
for any $X,Y,Z\in\mathfrak{X}(M)$. For sake of simplicity, we now rewrite the above equation in the tensorial language as follows
$$D_{ijk}=\nabla_{i}R_{jk}+\nabla_{j}R_{ki}+\nabla_{k}R_{ij}.$$
We say that a Riemannian manifold has cyclic parallel Ricci tensor if $D_{ijk}=0.$ Such concept was introduced by A. Gray in \cite{Gray} and clearly generalizes the definition of an Einstein manifold. Moreover, it is obvious that if the Ricci tensor of a Riemannian manifold is parallel, then it satisfies the above condition. But, the converse statement is not true, see \cite{Gray} and \cite{Jelonek} for more details.

The purpose of this paper is to investigate some special structures under cyclic parallel Ricci tensor condition. In order to unify such structures, we shall consider an $n$-dimensional compact Riemannian manifold $(M,g)$ which admits a smooth non-constant solution $f$ to the equation
\begin{equation}\label{eq:criticalspace}
-\Delta f g+Hess f-fRic=\mu Ric+\lambda g,
\end{equation}
where $Ric,\Delta$ and $Hess$ stand, respectively, for the Ricci tensor, the Laplacian operator and the Hessian form on $M$ and $(\mu,\lambda)$ are two real constants given by $(0,0)$ or $(0,1)$ or $(1,-R/n).$ Moreover, if $(\mu,\lambda)=(0,0)$ or $(0,1),$ the manifold considered here must be compact with non-empty boundary $\partial M.$ Such structures are well-known as positive static triple or Miao-Tam critical metric, respectively. Otherwise, if $(\mu,\lambda)=(1,-R/n),$  we will be dealing with the well-known CPE metric and in this case we take a compact manifold $M$ without boundary. (see Definition~\ref{CriticalSpace:DEF} in Section~\ref{Preliminaries} for a complete description).

In what follows, we should describe a brief introduction of each case separately. We start with a metric that emerges as critical point of the total scalar curvature functional, $g\mapsto\int_{M}R_{g}dM_{g}$ where $R$ and $dM$ stand for the scalar curvature and the volume form of the metric $g$, respectively, restricted to the space of of smooth metrics $g$ on a given compact manifold $M^{n},$ $n\geq3$, with constant scalar curvature and unit volume, for simplicity CPE metrics.

As a mentioned before, it was conjectured in 1980's that every CPE metric must be Einstein. Moreover, considering a non-trivial solution of Eq. (\ref{eq:criticalspace}) then, we can apply the well-known Obata's theorem (cf. \cite{Obata}), in order to deduce that $(M^{n},g)$ should be isometric to a round sphere. The conjecture was established in \cite{besse} and we would like to present here in the following way.

\begin{conjecture}\label{conjCPE}
A CPE metric is always Einstein.
\end{conjecture}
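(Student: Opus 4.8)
The conjecture above is still open in general, so the plan here is to prove it within the setting of the present paper, i.e.\ assuming that $(M,g)$ has cyclic parallel Ricci tensor, $D_{ijk}=0$, and, for the cleanest conclusion, that $\dim M=3$, so that the Weyl tensor vanishes. The strategy is to extract from the critical point equation \eqref{eq:criticalspace} two integral identities — one essentially automatic, the other made exploitable only by the cyclic parallel condition — and to play them against each other until the trace-free Ricci tensor is forced to vanish.

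First I would normalize. Tracing \eqref{eq:criticalspace} with $(\mu,\lambda)=(1,-R/n)$ gives $\Delta f=-\tfrac{R}{n-1}\,f$, and subtracting the trace shows that the trace-free Ricci tensor $\mathring R_{ij}:=R_{ij}-\tfrac{R}{n}g_{ij}$ satisfies
\begin{equation*}
\nabla_i\nabla_j f-\frac{\Delta f}{n}\,g_{ij}=(1+f)\,\mathring R_{ij}.
\end{equation*}
Since $R$ is constant the contracted second Bianchi identity gives $\operatorname{div}\mathring R=0$, so contracting the displayed equation with $\mathring R_{ij}$ and integrating by parts over the closed manifold $M$ yields
\begin{equation*}
\int_M (1+f)\,|\mathring R|^2\,dM=0 ,
\end{equation*}
and in particular $\int_M f\,|\mathring R|^2\,dM=-\int_M|\mathring R|^2\,dM$. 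Because $1+f$ changes sign, this identity by itself is inconclusive — this is exactly the well-known obstruction to the conjecture.

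Next I would exploit the cyclic parallel hypothesis through the Cotton tensor $C_{ijk}=\nabla_iR_{jk}-\nabla_jR_{ik}$ (recall $R$ is constant). Commuting covariant derivatives in the trace-free equation above and decomposing the curvature into its Weyl and Schouten parts produces the CPE--Cotton identity
\begin{equation*}
(1+f)\,C_{ijk}=\mathring R_{jk}\nabla_i f-\mathring R_{ik}\nabla_j f+\frac{1}{n-1}\big(\mathring R_{i\ell}\,g_{jk}-\mathring R_{j\ell}\,g_{ik}\big)\nabla^\ell f+W_{ijk\ell}\nabla^\ell f ,
\end{equation*}
whose right-hand side is bilinear in $\mathring R$ and $\nabla f$ and whose Weyl term vanishes when $n=3$. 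The condition $D_{ijk}=0$ now enters decisively: combined with $R$ constant it forces $\nabla_iR_{jk}=\tfrac13\big(C_{ijk}+C_{ikj}\big)$, hence $|C|^2=3\,|\nabla Ric|^2$, and it likewise rewrites $\Delta R_{ij}=\nabla^k\nabla_kR_{ij}$ purely in terms of (divergences of) the Cotton tensor, yielding a Weitzenb\"ock-type identity for $\mathring R$ with no uncontrolled curvature remainder. I would then contract the CPE--Cotton identity with $C^{ijk}$ and integrate; integrate by parts, feeding back $\operatorname{div}\mathring R=0$, the trace-free equation above, and the first identity (in particular $\int f|\mathring R|^2=-\int|\mathring R|^2$); and run the companion Bochner computation for $\int_M(1+f)\,|\nabla Ric|^2\,dM$ via the Weitzenb\"ock identity. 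I expect all of this to collapse to an inequality
\begin{equation*}
c_n\int_M |\mathring R|^2\,dM \ \le\ \beta_n\int_M (1+f)\,\mathring R_{i}{}^{j}\mathring R_{j}{}^{k}\mathring R_{k}{}^{i}\,dM, \qquad c_n>0 ,
\end{equation*}
so that, once the cubic term on the right is dominated, $\mathring R\equiv 0$, i.e.\ $(M,g)$ is Einstein; feeding a non-constant $f$ satisfying \eqref{eq:criticalspace} back into an Einstein background reduces it to the Obata equation, whence $(M^n,g)$ is a round sphere, in accordance with the conjecture.

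The main obstacle is exactly the gap that the first identity leaves open: since $1+f$ is sign-changing, $\int_M(1+f)|\mathring R|^2=0$ is not enough, and a genuinely independent second identity is indispensable. It is the cyclic parallel condition $D_{ijk}=0$ that supplies it, by reducing all first- and second-order derivatives of the Ricci tensor to Cotton-tensor data, so that both the Cotton identity and the Bochner formula can be closed up. The remaining delicate point is controlling the cubic term $\int_M(1+f)\,\mathring R_{i}{}^{j}\mathring R_{j}{}^{k}\mathring R_{k}{}^{i}\,dM$; in dimension three one can attack it with the sharp pointwise bound $|\mathring R_{i}{}^{j}\mathring R_{j}{}^{k}\mathring R_{k}{}^{i}|\le \tfrac{1}{\sqrt6}\,|\mathring R|^{3}$ combined with the first identity and a maximum-principle argument for the potential $1+f$, or by extracting from $D_{ijk}=0$ a rigidity constraint on the eigenvalues of the Ricci tensor.
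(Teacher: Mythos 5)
Your outline correctly assembles several ingredients the paper actually uses: the identity $\int_M(1+f)|\mathring{Ric}|^2\,dM_g=0$, the Cotton-tensor identity $(1+f)C_{ijk}=T_{ijk}+W_{ijkl}\nabla_{l}f$, and the consequence of $D_{ijk}=0$ that $|C_{ijk}|^{2}=3|\nabla Ric|^{2}$ (this is exactly (\ref{normaD}) with $D=0$). But the argument is not closed. The decisive step --- eliminating the cubic curvature term $\int_M(1+f)\bigl(R_{ij}R_{jk}R_{ki}-R_{ijkl}R_{ik}R_{jl}\bigr)dM_g$, which equals $\int_M(1+f)\bigl(\tfrac{1}{n-1}R|\mathring{Ric}|^{2}+\tfrac{n}{n-2}\operatorname{tr}(\mathring{Ric}^{3})\bigr)dM_g$ when $W=0$ --- is something you only say you ``expect'' to happen, and you then explicitly flag it as the ``remaining delicate point.'' Your fallback, the pointwise bound $|\operatorname{tr}(\mathring{Ric}^{3})|\le\tfrac{1}{\sqrt6}|\mathring{Ric}|^{3}$ combined with a maximum principle for $1+f$, cannot close the gap: there is no a priori $L^\infty$ control on $\mathring{Ric}$, so an $L^{2}$ quantity cannot dominate an $L^{3}$ quantity, and $1+f$ changes sign, so no one-sided estimate is available either. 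This is a genuine gap, not a detail to be filled in.

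The paper's resolution is an exact cancellation rather than an estimate, and that is the idea your proposal is missing. One integrates $D_{ijk}R_{jk}\nabla_{i}f$ by parts using the Ricci identity (\ref{idRicci}); this produces the cubic term with one sign (equation (\ref{auxintDR3})). Independently, integrating the Bochner formula of Proposition~\ref{bochnerAUX} produces the same cubic term with the opposite sign (equation (\ref{INTDaux})). Adding the two identities kills the cubic term identically and yields Theorem~\ref{thmMainC}. Setting $D_{ijk}=0$ and $W=0$ (dimension three) gives the identity (\ref{EqDnull}), and a second, separate application of Lemma~\ref{L4} --- integrated over the closed manifold, where all divergence terms vanish --- forces $\int_M(1+f)|\nabla Ric|^{2}\,dM_g=0$. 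Substituting back, one is left with $R\int_M|\mathring{Ric}|^{2}\,dM_g=0$, and since $R>0$ for a nontrivial CPE solution (as $f$ is a nonconstant eigenfunction of $-\Delta$ with eigenvalue $R/(n-1)$), the metric is Einstein and Obata's theorem gives the round sphere. If you want to repair your approach, look for this pairing of two integral identities whose cubic remainders cancel, rather than an inequality that tries to dominate the cubic term.
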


In \cite{CHY12} Chang, Hwang and Yun, proved the conjecture for $n$-dimensional manifolds with parallel Ricci tensor. Recently the first author obtained the same result with a different approach, see \cite[Remark 2]{B17} for more details.

\begin{theorem}[Chag-Hwang-Yun, \cite{CHY12}]\label{CHYCPE}
Let $(M, g)$ be an $n$-dimensional compact Riemannian manifold, $n\geq3$, and $f$ be a non-trivial solution of CPE. If its Ricci tensor is parallel,
then $(M, g)$ is isometric to a standard sphere.
\end{theorem}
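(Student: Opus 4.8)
The plan is to deduce from the hypothesis $\nabla Ric = 0$ that $(M,g)$ is Einstein, and then to apply Obata's theorem; the first step is the substantive one.

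First I would take the trace of the CPE equation \eqref{eq:criticalspace} with $(\mu,\lambda)=(1,-R/n)$, which gives $\Delta f = -\tfrac{R}{n-1}f$. Since $f$ is constant-free and $M$ is closed, integrating this identity over $M$ in two ways is useful: pairing it with $f$ gives $\int_M |\nabla f|^2\,dM = \tfrac{R}{n-1}\int_M f^2\,dM$, which forces $R>0$ because $f$ is non-constant; integrating $\Delta f$ itself then gives $\int_M f\,dM = 0$.

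Next I would rewrite \eqref{eq:criticalspace} in the traceless form
\[
Hess\, f = (1+f)\,T - \frac{Rf}{n(n-1)}\,g, \qquad T:=Ric-\frac{R}{n}g ,
\]
and take the pointwise inner product with $T$. Since $\langle g, T\rangle = \operatorname{tr} T = 0$, this yields $\langle Hess\,f, T\rangle = (1+f)|T|^2$. Integrating over $M$ and using $\int_M \langle Hess\,f, T\rangle\,dM = -\int_M \langle\nabla f, \operatorname{div} T\rangle\,dM$ together with $\operatorname{div} T = \tfrac{n-2}{2n}\nabla R = 0$ (here $R$ is constant), the left-hand integral vanishes. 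This is the place where the hypothesis enters: $\nabla Ric = 0$ gives $\nabla T = 0$, so $|T|^2$ is a \emph{constant} function on $M$, whence
\[
0 = \int_M (1+f)|T|^2\,dM = |T|^2\Big(\operatorname{Vol}(M) + \int_M f\,dM\Big) = |T|^2\operatorname{Vol}(M),
\]
forcing $T\equiv 0$; that is, $(M,g)$ is Einstein.

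Finally, with $T=0$ the CPE equation collapses to $Hess\,f = -\tfrac{R}{n(n-1)}f\,g$ with $\tfrac{R}{n(n-1)}>0$ and $f$ non-constant, so Obata's theorem (cf. \cite{Obata}) gives that $(M,g)$ is isometric to a round sphere of radius $\sqrt{n(n-1)/R}$. The only delicate points are the positivity of $R$ and the vanishing of $\int_M f\,dM$, both obtained from the trace identity on the closed manifold, and the observation that parallel Ricci makes $|T|$ constant — this last fact is exactly what eliminates the sign-indefinite weight $1+f$ from the integral, which is the obstruction that keeps the CPE conjecture open in general, since one cannot control $\int_M f\,|T|^2\,dM$ without extra hypotheses. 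An alternative route would be to exploit the de Rham decomposition induced by the parallel eigendistributions of $Ric$ and analyze \eqref{eq:criticalspace} factor by factor, but the integral argument above is shorter.
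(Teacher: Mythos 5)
Your argument is correct. Note first that the paper does not prove Theorem~\ref{CHYCPE} at all --- it imports it from \cite{CHY12} and uses it as a black box at the end of the proof of Theorem~\ref{thmCPE}; so the comparison here is between your self-contained argument and the machinery the paper builds for the weaker hypothesis $D_{ijk}=0$. Each of your steps checks out: the trace of \eqref{eq:criticalspace} with $(\mu,\lambda)=(1,-R/n)$ does give $\Delta f=-\tfrac{R}{n-1}f$, whence $R>0$ and $\int_M f\,dM_g=0$ on the closed manifold; the traceless part of the equation is exactly \eqref{IdRicHess} with $\mu=1$; pairing with $\mathring{Ric}$, integrating by parts, and using $\operatorname{div}\mathring{Ric}=\tfrac{n-2}{2n}\nabla R=0$ gives $\int_M(1+f)|\mathring{Ric}|^2\,dM_g=0$, which is precisely the boundaryless analogue of Lemma~\ref{AuxRicci}; and parallel Ricci makes $|\mathring{Ric}|^2$ constant, so $\int_M f\,dM_g=0$ kills the sign-indefinite weight and forces $\mathring{Ric}=0$. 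The Obata step is then standard. By contrast, the paper's route to the same conclusion under the weaker cyclic-parallel hypothesis has to work much harder (Lemma~\ref{L4}, Proposition~\ref{bochnerAUX}, Theorem~\ref{thmMainC}) to produce the sign-definite term $\int_M(1+f)|\nabla Ric|^2\,dM_g$, which is what replaces your ``$|\mathring{Ric}|^2$ is constant'' observation; your proof buys brevity at the cost of using the full strength of $\nabla Ric=0$. One stylistic remark: you could shortcut the $\operatorname{div}\mathring{Ric}=0$ step by noting that $\nabla Ric=0$ already makes $\langle Hess\,f,\mathring{Ric}\rangle=\nabla_i(\mathring{R}_{ij}\nabla_jf)$ an exact divergence, but this changes nothing essential.
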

Let us highlight that every manifold with parallel Ricci tensor has harmonic curvature (i.e., $\nabla_{i}R_{ijkl}=0$). However, there are examples proving that the converse statement is not true see, for instance, \cite{derd} and \cite{derd2}. In this sense, Yun et al. \cite{CHY14,CHY16} was able to obtain the conjecture under the harmonicity of Riemann curvature tensor (see Eq. (\ref{Bianchi})). In addition, since a CPE metric has constant scalar curvature, it is not difficult to check that, the CPE conjecture is also true under the weaker condition of harmonic Weyl tensor.

Motivated by the historical development on the study of CPE metrics, we shall obtain an affirmative answer to the CPE conjecture under cyclic parallel Ricci tensor condition in dimension three. We now state our first result.

\begin{theorem}\label{thmCPE}
The Conjecture~\ref{conjCPE} is true for $3$-dimensional manifolds with cyclic parallel Ricci tensor.
\end{theorem}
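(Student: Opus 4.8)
The plan is to prove that the trace-free Ricci tensor $\mathring{\mathrm{Ric}}:=\mathrm{Ric}-\tfrac{R}{3}g$ vanishes identically; then $(M^{3},g)$ is Einstein, which is exactly the assertion of Conjecture~\ref{conjCPE}, and Obata's theorem (equivalently, Theorem~\ref{CHYCPE}, since an Einstein metric has parallel Ricci tensor) identifies it with a round sphere.

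I would first record the usual consequences of~\eqref{eq:criticalspace} with $(\mu,\lambda)=(1,-R/3)$: since $R$ is constant, tracing gives $\Delta f=-\tfrac{R}{2}f$, so that $\mathrm{Hess}\,f=(1+f)\mathring{\mathrm{Ric}}-\tfrac{R}{6}fg$, and one checks $R>0$ (otherwise $f$ would be constant). Then comes the one genuinely convention-heavy step: commuting the third covariant derivatives of $f$ via the Ricci identity, inserting the Hessian formula, and using that in dimension three the Weyl tensor vanishes (so the Riemann tensor is the explicit combination of $\mathrm{Ric}$, $R$ and $g$), I would derive the fundamental identity for the Cotton tensor $C_{ijk}=\nabla_iR_{jk}-\nabla_jR_{ik}$ (the $\nabla R$ term of its usual definition dropping because $R$ is constant):
$$(1+f)\,C_{ijk}=2\big(\nabla_jf\,R_{ik}-\nabla_if\,R_{jk}\big)+R\big(\nabla_if\,g_{jk}-\nabla_jf\,g_{ik}\big)+\big(\mathrm{Ric}(\nabla f)\big)_jg_{ik}-\big(\mathrm{Ric}(\nabla f)\big)_ig_{jk}.$$

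Now I would bring in the hypothesis. Cyclic parallel Ricci, $D=0$, combined with $R$ constant, gives the purely algebraic relation $3\nabla_iR_{jk}=C_{ijk}+C_{ikj}$, from which — using the total trace-freeness of $C$ and the cyclic relation $C_{ijk}+C_{jki}+C_{kij}=0$ — I would extract $|C|^{2}=3|\nabla\mathring{\mathrm{Ric}}|^{2}$, $\operatorname{div}(\mathring{\mathrm{Ric}}^{2})=-\tfrac14\nabla|\mathring{\mathrm{Ric}}|^{2}$, and $C_{ijk}\mathring R^{ik}=-\tfrac34\nabla_j|\mathring{\mathrm{Ric}}|^{2}$. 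Contracting the fundamental identity with $C^{ijk}$, every term carrying a factor $g_{jk}$ or $g_{ik}$ is annihilated by trace-freeness and the rest collapses to $(1+f)|C|^{2}=-3\langle\nabla|\mathring{\mathrm{Ric}}|^{2},\nabla f\rangle$. Integrating over the closed manifold, integrating by parts, and using $\int_{M}f|\mathring{\mathrm{Ric}}|^{2}=-\int_{M}|\mathring{\mathrm{Ric}}|^{2}$ — itself obtained by pairing $\mathrm{Hess}\,f$ with $\mathring{\mathrm{Ric}}$ and integrating by parts via $\operatorname{div}\mathring{\mathrm{Ric}}=0$ — gives a first global relation
$$\int_{M}(1+f)\,|\nabla\mathring{\mathrm{Ric}}|^{2}=\tfrac{R}{2}\int_{M}|\mathring{\mathrm{Ric}}|^{2}.$$

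To close the argument I would compute the same weighted quantity a second, independent way. Still using $D=0$ and the dimension-three curvature decomposition, I would establish the Weitzenböck-type formula $\Delta R_{ij}=6(\mathrm{Ric}^{2})_{ij}-3R\,R_{ij}-(2|\mathrm{Ric}|^{2}-R^{2})g_{ij}$ for the rough Laplacian, so that $\langle\mathring{\mathrm{Ric}},\Delta\mathrm{Ric}\rangle=6\operatorname{tr}(\mathring{\mathrm{Ric}}^{3})+R|\mathring{\mathrm{Ric}}|^{2}$; pairing $\mathrm{Hess}\,f$ with $\mathring{\mathrm{Ric}}^{2}$ and using $\operatorname{div}(\mathring{\mathrm{Ric}}^{2})=-\tfrac14\nabla|\mathring{\mathrm{Ric}}|^{2}$ gives $\int_{M}(1+f)\operatorname{tr}(\mathring{\mathrm{Ric}}^{3})=-\tfrac{7R}{24}\int_{M}|\mathring{\mathrm{Ric}}|^{2}$; and evaluating $\int_{M}(1+f)\langle\mathring{\mathrm{Ric}},\Delta\mathrm{Ric}\rangle$ both by integration by parts and by substituting the two previous displays forces $\int_{M}(1+f)|\nabla\mathring{\mathrm{Ric}}|^{2}=2R\int_{M}|\mathring{\mathrm{Ric}}|^{2}$. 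Comparing with the first global relation and using $R>0$ yields $\int_{M}|\mathring{\mathrm{Ric}}|^{2}=0$, hence $\mathring{\mathrm{Ric}}\equiv0$ and $(M^{3},g)$ is Einstein. I expect the obstacle to be structural rather than computational: each ingredient — the fundamental Cotton identity, the Weitzenböck formula, the integration-by-parts relations — is individually routine, but none suffices in isolation, and the crux is to see that the structure imposed by the CPE equation and the structure imposed by the cyclic parallel Ricci condition together over-determine $\int_{M}(1+f)|\nabla\mathring{\mathrm{Ric}}|^{2}$; a close second is getting the signs and constants right in the fundamental identity and in the Weitzenböck formula.
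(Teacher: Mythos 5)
Your strategy is sound and it reaches the right conclusion, but it is genuinely different in organization from the paper's. The paper proves a single $n$-dimensional integral identity valid simultaneously for static triples, Miao--Tam metrics and CPE metrics (Theorem~\ref{thmMainC}), built from the auxiliary tensor $T_{ijk}$, the divergence formula of Lemma~\ref{L4} and the Bochner-type formula of Proposition~\ref{bochnerAUX}, and then sets $D_{ijk}=0$; for the CPE case it needs the integrated form of Lemma~\ref{L4} as a second relation. You instead work entirely in dimension three for the CPE case, exploiting $W\equiv 0$ to turn Lemma~\ref{L1} into your ``fundamental identity'' and extracting from $D=0$ the pointwise algebraic facts $|C|^{2}=3|\nabla\mathring{\mathrm{Ric}}|^{2}$, $C_{ijk}\mathring R_{ik}=-\tfrac34\nabla_j|\mathring{\mathrm{Ric}}|^{2}$, $\operatorname{div}(\mathring{\mathrm{Ric}}^{2})=-\tfrac14\nabla|\mathring{\mathrm{Ric}}|^{2}$ (all of which I checked and which are correct), leading to your first global relation $\int(1+f)|\nabla\mathring{\mathrm{Ric}}|^{2}=\tfrac{R}{2}\int|\mathring{\mathrm{Ric}}|^{2}$, which I also verified. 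The underlying ingredients (Lemma~\ref{L1}, the identity $\tfrac13|D|^2+|C|^2=3|\nabla \mathrm{Ric}|^2$, integration against $f$, a Weitzenb\"ock formula for $\mathrm{Ric}$) are the same circle of ideas, but your two-relation comparison is more elementary and self-contained than the paper's machinery, at the cost of not covering the other two critical structures or higher dimensions. The one concrete error is the sign of your Weitzenb\"ock formula: with the conventions fixed by the paper's Weyl decomposition and Ricci identity (which are internally consistent — check against $R_{ijks}R_{si}=-4g_{jk}$ on the unit $\mathbb{S}^{3}$), the cyclic-parallel condition in dimension three gives
\begin{equation*}
\Delta R_{ij}=-6(\mathrm{Ric}^{2})_{ij}+3R\,R_{ij}+\bigl(2|\mathrm{Ric}|^{2}-R^{2}\bigr)g_{ij},
\end{equation*}
the negative of what you wrote, so $\langle\mathring{\mathrm{Ric}},\Delta \mathrm{Ric}\rangle=-6\operatorname{tr}(\mathring{\mathrm{Ric}}^{3})-R|\mathring{\mathrm{Ric}}|^{2}$ and your second relation becomes $\int(1+f)|\nabla\mathring{\mathrm{Ric}}|^{2}=-\tfrac{3R}{2}\int|\mathring{\mathrm{Ric}}|^{2}$ rather than $2R\int|\mathring{\mathrm{Ric}}|^{2}$. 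Fortunately this does not damage the argument: either coefficient differs from $\tfrac{R}{2}$, and since $R>0$ (as you correctly note, $f$ is a nonconstant eigenfunction with $-\Delta f=\tfrac{R}{2}f$) the comparison still forces $\int_{M}|\mathring{\mathrm{Ric}}|^{2}=0$, hence $g$ is Einstein. Your auxiliary computation $\int(1+f)\operatorname{tr}(\mathring{\mathrm{Ric}}^{3})=-\tfrac{7R}{24}\int|\mathring{\mathrm{Ric}}|^{2}$ is correct as stated. So: fix the sign, and the proof goes through.
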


Next, we must report some facts about static metrics. The study of static spacetime has been of substantial interest from both the physical and mathematical aspects. From a physical point of view, static potentials appears in General Relativity as global solutions to Einstein equations and therefore that interest is motivated by their role in the study of different problems in such area see, for example, \cite{FM75} and \cite{FischerMarsden}. From a mathematical point of view, that interest is also motivated by the fact that these structures exhibit nice geometric properties. Actually, Kobayashi \cite{kobayashi} and Lafontaine \cite{lafontaine} working independently, proved a classification result for locally conformally flat static triples in all dimensions. For our purpose, since $3$D-static triples satisfying the parallel Ricci tensor assumption implies in locally conformally flat metrics, we state their result in dimension $n=3$ under such condition. More precise, they established the following result.

\begin{theorem}[Kobayashi \cite{kobayashi}, Lafontaine \cite{lafontaine}]\label{KL}
Let $(M^{3},\,g,\,f)$ be an $3$-dimensional positive static triple with scalar curvature $R=6.$ Suppose that $(M^{3},\,g)$ has parallel Ricci tensor, then $(M^{3},\,g,\,f)$ is covered by a static triple equivalent to one of the following two static triples:

\begin{enumerate}
\item The standard hemisphere with canonical metric $(\mathbb{S}^{3}_{+},g_{\mathbb{S}^{3}}).$
\item The standard cylinder over $\mathbb{S}^{2}$ with the product metric
$$\Big(M=\Big[0,\frac{\pi}{\sqrt{3}}\Big]\times\mathbb{S}^{2},\; g=dt^{2}+\frac{1}{3}g_{\mathbb{S}^{2}}\Big).$$
\end{enumerate}

\end{theorem}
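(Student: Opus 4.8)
The plan is to first reduce the static equation to a usable form and then exploit parallel Ricci. Specializing \eqref{eq:criticalspace} with $(\mu,\lambda)=(0,0)$ to $n=3$ and $R=6$, the static system reads $\operatorname{Hess}f=f\operatorname{Ric}+\Delta f\,g$, and tracing gives $\Delta f=-\tfrac{fR}{n-1}=-3f$; hence
\begin{equation*}
\operatorname{Hess}f=f\operatorname{Ric}-3f\,g.
\end{equation*}
The crucial observation is that in dimension three the full curvature tensor is an algebraic expression in $\operatorname{Ric}$, $R$ and $g$, so the hypothesis $\nabla\operatorname{Ric}=0$ forces $\nabla R=0$ and therefore $\nabla\operatorname{Riem}=0$; that is, $(M^3,g)$ is locally symmetric. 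I would then diagonalize the parallel Ricci tensor: its eigenvalues are constant and its eigendistributions are parallel, so passing to the universal cover and invoking the de Rham decomposition splits $(M^3,g)$ into a global Riemannian product according to the eigenvalue multiplicities.

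Next I would run the resulting case analysis. If all three eigenvalues coincide the manifold is Einstein with $\operatorname{Ric}=2g$, i.e.\ of constant sectional curvature $1$. If exactly two coincide, the $1$-dimensional eigenspace is flat and contributes a zero eigenvalue, while the complementary parallel $2$-plane integrates to a constant-curvature surface; the constraint $R=6$ then pins the eigenvalues to $(3,3,0)$, so locally $(M^3,g)\cong \big(\mathbb{R}\times N^2,\,dt^2+\tfrac13 g_{\mathbb{S}^2}\big)$ with $N^2=\mathbb{S}^2$ of Gauss curvature $3$. The case of three distinct eigenvalues is impossible, since it would split $M$ as a product of three lines and force $g$ flat, contradicting $R=6$; likewise the non-positively curved products are excluded because $R=6>0$. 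This reduces the problem to the two model backgrounds appearing in the statement.

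Finally I would solve the static equation on each background. In the Einstein case it becomes $\operatorname{Hess}f=-f\,g$; on the round $\mathbb{S}^3$ the non-constant solutions are the height functions, whose positivity set $f>0$ is an open hemisphere with $f^{-1}(0)$ the totally geodesic equator, yielding case $(1)$ up to covering. In the product case I would evaluate $\operatorname{Hess}f=f\operatorname{Ric}-3fg$ along the surface directions: since $\operatorname{Ric}|_{TN}=3g_{N}$, the right-hand side vanishes there, so the intrinsic Hessian of $f$ on each compact slice $\mathbb{S}^2$ is zero and hence $f$ is constant on slices, giving $f=f(t)$. The $\partial_t\partial_t$-component then reduces to the ODE $f''=-3f$, so $f=A\cos(\sqrt3\,t)+B\sin(\sqrt3\,t)$; requiring $f$ to vanish on $\partial M$ and be positive in the interior confines $t$ to an interval bounded by two consecutive zeros, which are a distance $\pi/\sqrt3$ apart, and normalizing one zero to $t=0$ gives $f=B\sin(\sqrt3\,t)$ on $\big[0,\pi/\sqrt3\big]$, which is case $(2)$. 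The main obstacle I anticipate is the global step: carefully passing to the universal cover in the presence of the boundary, applying the de Rham splitting there, and then descending so as to justify the ``is covered by'' conclusion; identifying precisely how the boundary condition $f^{-1}(0)=\partial M$ selects the interval length $\pi/\sqrt3$ (rather than a multiple of it) is the delicate point in the product case.
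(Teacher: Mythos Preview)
The paper does not prove this theorem; it is quoted as a result of Kobayashi and Lafontaine and used as a black box in the proof of Theorem~\ref{Static}. The only argument the paper supplies is the one-line reduction in the introduction: in dimension three, parallel Ricci tensor forces the Cotton tensor to vanish, hence $(M^3,g)$ is locally conformally flat, and one then invokes the Kobayashi--Lafontaine classification of locally conformally flat static triples. There is therefore no ``paper's own proof'' to compare your attempt against.

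That said, your approach is a genuinely self-contained alternative to citing the conformally-flat classification. Rather than passing through local conformal flatness, you exploit that in dimension three $\operatorname{Riem}$ is algebraic in $\operatorname{Ric}$, so parallel Ricci gives local symmetry; the de Rham splitting of the universal cover then produces exactly the two backgrounds, on which you solve the static equation directly. The computations you outline are correct: the eigenvalue case analysis (forcing $(2,2,2)$ or $(3,3,0)$), the reduction $\operatorname{Hess}_{\mathbb{S}^2}f=0\Rightarrow f=f(t)$ via the product connection, and the ODE $f''=-3f$ all go through. What your route buys is that it avoids importing the full Kobayashi--Lafontaine machinery; what it costs is precisely the global step you flag---making the de Rham decomposition rigorous for a manifold with boundary (one typically doubles across $\partial M$ or extends past the boundary using the static equation), and checking that the positivity condition on $f$ selects a single half-period rather than a longer interval. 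Those are genuine technical points, but you have correctly identified them as the only remaining work.
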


At this point, it is important to remember the {\it cosmic no-hair conjecture} which states that the standard round hemisphere is the only compact simply connected static triple with positive scalar curvature and connected boundary. Such conjecture was formulated by Boucher, Gibbons and Horowitz in \cite{BGH} and many partial answers were achieved see, for instance, \cite{Amb,balt18,adam,Montiel,kobayashi,lafontaine,QY}.

Here, inspired by this last result on the study of static triples, we shall replace the assumption that the manifold has parallel Ricci tensor in the Kobayashi-Lafontaine result by the cyclic parallel Ricci tensor condition, which is weaker that the former. So, our result can be seen as a partial answer to
{\it cosmic no-hair conjecture}. To be precise, we have established the following result.

\begin{theorem}\label{Static}
Let $(M^{3},\,g,\,f)$ be an $3$-dimensional positive static triple with scalar curvature $R=6.$ Suppose that $(M^{3},\,g)$ has cyclic parallel Ricci tensor, then $(M^{3},\,g,\,f)$ is covered by a static triple equivalent to one of the following two static triples:

\begin{enumerate}
\item The standard hemisphere with canonical metric $(\mathbb{S}^{3}_{+},g_{\mathbb{S}^{3}}).$
\item The standard cylinder over $\mathbb{S}^{2}$ with the product metric
$$\Big(M=\Big[0,\frac{\pi}{\sqrt{3}}\Big]\times\mathbb{S}^{2},\; g=dt^{2}+\frac{1}{3}g_{\mathbb{S}^{2}}\Big).$$
\end{enumerate}

\end{theorem}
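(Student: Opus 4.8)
The plan is to reduce the cyclic-parallel-Ricci hypothesis to the parallel Ricci hypothesis of Theorem \ref{KL}, so that the classification follows immediately. In dimension three the Weyl tensor vanishes, so the full curvature is determined by the Ricci tensor; moreover a well-known identity relates the divergence of the Cotton tensor $C_{ijk}$ to $\nabla R_{ij}$ and the tensor $D_{ijk}$ from \eqref{auxD}. Since a static triple has constant scalar curvature ($R=6$), one has $C_{ijk}=\nabla_i R_{jk}-\nabla_j R_{ik}$ (the scalar-curvature terms drop out), and the cyclic condition $D_{ijk}=0$ lets us rewrite $\nabla_i R_{jk}$ entirely in terms of the antisymmetric part $C_{ijk}$: explicitly, decomposing $\nabla_i R_{jk}$ into its totally symmetric part (which is $\tfrac13 D_{ijk}=0$) and the remaining pieces expressible through $C$, one gets $3\nabla_i R_{jk} = C_{ikj}+C_{jki}$ or a similar contraction identity. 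Hence \emph{parallel Ricci tensor is equivalent to $C\equiv 0$, i.e.\ to local conformal flatness}, under the cyclic hypothesis together with constancy of $R$.

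So the real work is to show $C_{ijk}\equiv 0$. First I would recall the static equations $Hess\, f = f(Ric - \tfrac{R}{n-1}g)$ together with $\Delta f = -\tfrac{R}{n-1}f$ (the trace), specialized to $n=3$, $R=6$: $Hess\, f = f(Ric - 3g)$ and $\Delta f = -3f$. Differentiating the static equation and commuting derivatives produces the standard identity linking $f\,C_{ijk}$ to terms in the third derivatives of $f$ and curvature; combined with the Bochner-type formula this yields an equation of the form $f\,|C|^2$ (integrated) $= \int_M (\text{divergence terms}) + \int_M (\text{quadratic curvature terms})$. The key structural tool is the standard computation (used in Cao–Chen, Qing–Yuan, and in the CPE literature cited here) giving $\operatorname{div}(f\,C) $ or $\nabla_i(f C_{ijk})$ in terms of $|C|^2$ modulo divergences, using precisely $D_{ijk}=0$ to kill the terms that would otherwise obstruct integrating by parts. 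Integrating over $M^3$ and handling the boundary: on $\partial M$ one has $f=0$ and $\partial M$ is totally geodesic with $|\nabla f|$ constant on each component, which forces the boundary integrals to vanish or to have a definite sign.

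The main obstacle will be the boundary analysis. Unlike the CPE case (closed manifold, Theorem \ref{thmCPE}), here $M$ has boundary $\partial M=\{f=0\}$, and the integration-by-parts argument produces boundary terms involving the second fundamental form of $\partial M$ and the normal derivative of $f$; one must show these are nonnegative (or vanish) using the positivity of the static triple and the fact that $\partial M$ is a totally geodesic hypersurface on which $Ric$ has controlled form. I would exploit that on $\partial M$ the vanishing of $f$ together with $Hess\,f = f(Ric-3g)$ gives $Hess\,f|_{\partial M}=0$, so the only nontrivial boundary data is $|\nabla f|^2$, constant on each component, and the induced geometry is that of a surface of constant curvature; this should let me conclude the boundary contribution has the correct sign. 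Once $\int_M f|C|^2 \le 0$ is obtained with $f>0$ in the interior, we get $C\equiv 0$ in $M\setminus\partial M$, hence $\nabla Ric\equiv 0$ by the equivalence above, and Theorem \ref{KL} finishes the proof. A secondary technical point is justifying that the manifold, once shown to have parallel Ricci tensor, is \emph{covered by} one of the two model triples rather than isometric to one — this is exactly the conclusion of Theorem \ref{KL}, so no extra argument is needed beyond invoking it.
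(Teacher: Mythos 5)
Your overall skeleton coincides with the paper's: reduce the cyclic-parallel hypothesis to parallel Ricci tensor and then invoke Theorem~\ref{KL}. Your observation that, when $D_{ijk}=0$ and $R$ is constant, one has $C_{ijk}+C_{ikj}=3\nabla_iR_{jk}-D_{ijk}$, so that $\nabla Ric\equiv 0$ is equivalent to $C\equiv 0$, is correct and is exactly the content of identity~\eqref{normaD} ($\tfrac13|D|^2+|C|^2=3|\nabla Ric|^2$). The final step (parallel Ricci plus Kobayashi--Lafontaine) is also exactly what the paper does, and you are right that no extra argument is needed there.

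The genuine gap is the middle of the argument: you assert, but do not derive, the integral identity that forces $\int_M f|C|^2=0$ (equivalently $\int_M f|\nabla Ric|^2=0$). You attribute it to ``the standard computation'' from the soliton/CPE literature, but the identity needed here is not off-the-shelf: it is precisely the content of Theorem~\ref{thmMainC}, whose proof occupies the technical core of the paper and requires Lemma~\ref{L1}, the tensor $T_{ijk}$ of \eqref{TensorT}, Lemma~\ref{L4}, and the Bochner-type formula of Proposition~\ref{bochnerAUX}, combined in a specific way so that the cubic curvature terms $R_{ij}R_{jk}R_{ik}-R_{ijkl}R_{ik}R_{jl}$ cancel between the two resulting integral identities. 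Without carrying out that computation (or an equivalent one), the claim $\int_M f|C|^2\le 0$ is unsupported; in particular it is not clear a priori that the sign of the surviving term is favorable, and indeed the favorable sign in the static case comes from the coefficient $\tfrac{3(2n-3)}{n-2}>0$ multiplying $\int_M f|\nabla Ric|^2$ together with $\lambda=0$. Separately, the ``main obstacle'' you anticipate --- boundary terms involving the second fundamental form requiring a sign analysis --- is not actually present: every divergence appearing in the paper's identity carries a factor of $\mu+f=f$, which vanishes identically on $\partial M=f^{-1}(0)$, so all boundary integrals drop out for free and no use of the totally geodesic structure of $\partial M$ is needed.
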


Finally, we would like to describe some facts about Miao-Tam critical metrics. Such structures have investigated by Miao and Tam in \cite{miaotam}, where the authors provided sufficient condition for a metric to be a critical point of the volume functional on the space of constant scalar curvature metrics with a prescribed boundary metric. For more details, we refer the reader to \cite[Theorem 5]{miaotam}.

In 2011, Miao and Tam posed the question of whether there exist non-constant sectional curvature Miao-Tam critical metrics on a compact manifold whose boundary is isometric to a standard round sphere, see \cite{miaotamTAMS}. In order to give a partial answer to this problem, they showed that an Einstein Miao-Tam critical metric $(M^{n},g,f)$ must be isometric to a geodesic ball in a simply connected space form $\mathbb{R}^{n}$, $\mathbb{H}^{n}$ or $\mathbb{S}^{n}$. In this special case the restriction on the boundary was not necessary.

At the same time, the first author and Ribeiro Jr. in \cite{balt18} studied these critical metrics under parallel Ricci condition. More precisely, they obtained the following result.
\begin{theorem}[Baltazar-Ribeiro, \cite{balt18}]
\label{thmMT}
Let $(M^{n},g,f)$ be a compact, oriented, connected Miao-Tam critical metric with parallel Ricci tensor and smooth boundary $\partial M.$ Then $(M^{n},g)$ is isometric to a geodesic ball in a simply connected space form $\mathbb{R}^{n}$, $\mathbb{H}^{n}$ or $\mathbb{S}^{n}.$
\end{theorem}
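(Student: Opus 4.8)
The plan is to show that a Miao--Tam critical metric with parallel Ricci tensor must be Einstein; the theorem then follows at once from Miao and Tam's classification of \emph{Einstein} critical metrics recalled above, namely that such a metric is isometric to a geodesic ball in a simply connected space form $\mathbb{R}^n$, $\mathbb{H}^n$ or $\mathbb{S}^n$. So the whole task reduces to the implication: Miao--Tam critical $+\ \nabla\mathrm{Ric}=0\ \Longrightarrow\ \mathrm{Ric}=\tfrac{R}{n}g$.

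First I would record the algebraic form of the structure equation. In the Miao--Tam case $(\mu,\lambda)=(0,1)$, Eq.~\eqref{eq:criticalspace} reads $-\Delta f\,g+\mathrm{Hess}\,f-f\mathrm{Ric}=g$, and these metrics have constant scalar curvature $R$. Tracing gives $\Delta f=-\tfrac{Rf+n}{n-1}$, hence
\[
\mathrm{Hess}\,f=h\,g+f\mathrm{Ric},\qquad h:=1+\Delta f=-\frac{Rf+1}{n-1},\qquad \nabla h=-\frac{R}{n-1}\,\nabla f.
\]
Now I would exploit $\nabla\mathrm{Ric}=0$ structurally: the Ricci operator has globally constant eigenvalues $\mu_1,\dots,\mu_k$ (here connectedness of $M$ is used) with mutually orthogonal, $\nabla$-parallel eigendistributions $V_1,\dots,V_k$, whose orthogonal projections $\Pi_a$ are parallel tensors. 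Writing $\nabla f=\sum_a w_a$ with $w_a:=\Pi_a\nabla f$, the equation $\mathrm{Hess}\,f=hg+f\mathrm{Ric}$ together with $\nabla\Pi_a=0$ yields $\nabla_X w_a=(h+f\mu_a)\,\Pi_a X$; in particular each $w_a$ is a closed $1$-form and $\mathrm{div}\,w_a=(h+f\mu_a)\,m_a$, where $m_a:=\dim V_a$.

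The crucial step is a divergence identity for the pieces $w_a$. Computing $\mathrm{div}(\nabla w_a)$ in two ways --- once from $\nabla w_a=(h+f\mu_a)\Pi_a$ using $\nabla\Pi_a=0$ and $\nabla h=-\tfrac{R}{n-1}\nabla f$, and once from the Bochner formula $\mathrm{div}(\nabla w_a)=\nabla(\mathrm{div}\,w_a)+\mathrm{Ric}(w_a)$ using that $w_a$ is closed and $\mathrm{Ric}(w_a)=\mu_a w_a$ --- and subtracting, I get
\[
-\frac{R}{n-1}\,w_a=m_a\Big(\mu_a-\frac{R}{n-1}\Big)\nabla f\qquad\text{for every }a.
\]
Projecting this onto $V_b$ with $b\ne a$ kills the left-hand side and leaves $\big(\mu_a-\tfrac{R}{n-1}\big)w_b\equiv0$; hence if $w_a\not\equiv0$ and $w_b\not\equiv0$ for some $a\ne b$, then $\mu_a=\mu_b=\tfrac{R}{n-1}$, contradicting $\mu_a\ne\mu_b$. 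So exactly one component survives, say $\nabla f=w_\rho$, i.e. $\nabla f$ is a section of a single eigendistribution $V_\rho$ and $\mathrm{Ric}(\nabla f)=\rho\,\nabla f$ on all of $M$. Finally, since $V_\rho$ is parallel and contains $\nabla f$, one gets $\mathrm{Hess}\,f(X)=\nabla_X\nabla f\in V_\rho$ for \emph{every} $X$; taking $X$ a local non-vanishing section of another eigendistribution $V_\lambda$ ($\lambda\ne\rho$) would force $(h+f\lambda)X\in V_\rho\cap V_\lambda=\{0\}$, hence $h+f\lambda\equiv0$ on $M$, which is impossible for a non-constant $f$ (evaluating at two points with distinct values of $f$ gives $\lambda=\tfrac{R}{n-1}$ and then $\tfrac{1}{n-1}=0$). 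Therefore $V_\rho=TM$, i.e. $(M^n,g)$ is Einstein with $\mathrm{Ric}=\tfrac{R}{n}g$, and Miao--Tam's Einstein classification completes the proof.

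The main obstacle is precisely that middle step. Manifolds with parallel Ricci tensor include non-trivial Riemannian products of Einstein pieces, so one must rule out that the de Rham-type splitting $TM=\bigoplus_a V_a$ induced by $\mathrm{Ric}$ is non-trivial --- equivalently, that $\mathrm{Ric}$ has more than one eigenvalue. The displayed divergence identity is the device that accomplishes this, and producing it cleanly requires combining the parallelism of the projections $\Pi_a$ with the Bochner--Weitzenb\"ock identity for the closed $1$-forms $w_a$. It is worth noting that, pleasantly, no density of $\{\nabla f\ne0\}$ or real-analyticity of $f$ is needed: all the identities hold pointwise and the final contradiction only uses that $f$ is non-constant. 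An alternative but heavier route is to derive the integrability identity $R_{ijkl}\nabla^l f=\tfrac{R}{n-1}\big(g_{jk}\nabla_i f-g_{ik}\nabla_j f\big)+R_{jk}\nabla_i f-R_{ik}\nabla_j f$ by commuting third covariant derivatives of $f$ and feed it, together with $\mathrm{div}\,W=0$ (which follows from $\nabla\mathrm{Ric}=0$), into the Weyl decomposition; this also forces $\mathrm{Ric}(\nabla f)\parallel\nabla f$ and then the Einstein condition, but it is more involved, especially when $n\ge4$ and $W$ does not vanish.
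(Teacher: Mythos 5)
Your argument is correct, but note that the paper does not prove Theorem~\ref{thmMT} at all: it is imported verbatim from Baltazar--Ribeiro \cite{balt18} and used as a black box (here it serves as the endpoint of the reductions in Sections 4.1--4.3). So the honest comparison is with the machinery of \cite{balt18} that this paper does reproduce, namely integral Bochner-type identities such as Proposition~\ref{bochnerAUX} and Lemma~\ref{AuxRicci}, which reach the Einstein condition by integrating curvature identities over $M$ and controlling boundary terms. Your route is genuinely different and arguably cleaner for the parallel-Ricci case: you exploit the de Rham-type splitting $TM=\bigoplus_a V_a$ into parallel eigendistributions of the (parallel) Ricci operator, decompose $\nabla f=\sum_a w_a$, and play the two computations of $\mathrm{div}(\nabla w_a)$ (one from $\nabla w_a=(h+f\mu_a)\Pi_a$, one from the Weitzenb\"ock identity for the closed forms $w_a$) against each other to force a single eigenvalue. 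I checked the key identity $-\tfrac{R}{n-1}w_a=m_a\bigl(\mu_a-\tfrac{R}{n-1}\bigr)\nabla f$ (it is consistent in the Einstein case, where both sides equal $-\tfrac{R}{n-1}\nabla f$), and the final contradiction $h+f\mu_\lambda\equiv 0$ does rule out a second eigendistribution exactly as you say. What your approach buys is a purely pointwise/local argument needing no integration by parts and no boundary analysis, only connectedness; what it does not generalize to is the weaker hypotheses (cyclic parallel Ricci, harmonic curvature) that the integral method of \cite{balt18} and of this paper is built to handle. Two cosmetic points: the symbol $\lambda$ is already reserved for the constant in \eqref{eq:criticalspace}, so rename the eigenvalue in your last step; and you should cite explicitly that the concluding Einstein classification is Theorem~1.1 of \cite{miaotamTAMS}, which is where compactness, connectedness and the smooth boundary are actually used.
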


In the sequel, we shall prove the rigidity for a $3$-dimensional Miao-Tam critical metric with cyclic parallel Ricci tensor. More precisely, we have the following result.

\begin{theorem}\label{thmMT3D}
Let $(M^{3},g,f)$ be a Miao-Tam critical metric with cyclic parallel Ricci tensor. Then $(M^3,\,g)$ is isometric to a geodesic ball in a simply connected space form $\Bbb{R}^{3},$ $\Bbb{H}^{3}$ or $\Bbb{S}^{3}.$
\end{theorem}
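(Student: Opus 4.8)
The plan is to reduce everything to the parallel Ricci case, which is already settled by Theorem~\ref{thmMT}.

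First, the setup. A Miao--Tam critical metric has constant scalar curvature $R$, so \eqref{eq:criticalspace} with $(\mu,\lambda)=(0,1)$ becomes $Hess\,f = fRic + hg$ with $h = 1+\Delta f = -\tfrac{1+fR}{2}$ (tracing gives $\Delta f = -\tfrac{3+fR}{2}$); equivalently the trace-free Hessian equals $f\,\mathring R$, where $\mathring R_{ij} := R_{ij}-\tfrac{R}{3}g_{ij}$ is the trace-free Ricci tensor. Moreover $f>0$ in $\mathrm{int}(M)$, $f=0$ on $\partial M$, $|\nabla f|$ is a positive constant on each boundary component (so $\partial_\nu f<0$ there), and $\partial M$ is totally umbilic. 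The aim is to prove that $\nabla Ric\equiv 0$.

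Second, the relevant identities. Commuting the third covariant derivatives of $f$ through $(\nabla_i\nabla_j-\nabla_j\nabla_i)\nabla_k f = -R_{ijkl}\nabla^l f$ and using the three-dimensional expression of the curvature operator in terms of the Ricci tensor, one is led to the pointwise identity
\[
fC_{ijk} = 2\big(\mathring R_{ik}\nabla_j f - \mathring R_{jk}\nabla_i f\big) + \big(\mathring R_{jl}\nabla^l f\big)g_{ik} - \big(\mathring R_{il}\nabla^l f\big)g_{jk},
\]
where $C_{ijk} = \nabla_i R_{jk} - \nabla_j R_{ik}$ is the Cotton tensor (recall $R$ is constant). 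Now the hypothesis $D_{ijk}=0$ from \eqref{auxD} forces $\nabla_i \mathring R_{jk} = \tfrac13(C_{ijk}+C_{ikj})$, and from this one extracts two key relations,
\[
|C|^2 = 3\,|\nabla \mathring R|^2, \qquad f|C|^2 = -3\,\langle \nabla f, \nabla |\mathring R|^2\rangle,
\]
the second obtained by contracting the identity above with $C^{ijk}$ and using that the Cotton tensor is totally trace-free; in particular $f|\nabla \mathring R|^2 = -\langle \nabla f, \nabla |\mathring R|^2\rangle$ everywhere.

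Third — the heart of the proof — the integral estimate. Write $u = |\mathring R|^2 \ge 0$. Using that $R$ is constant, $\dim M = 3$ and $D_{ijk}=0$, one derives a Bochner-type formula $\Delta u = 2|\nabla \mathring R|^2 - 12\,\mathrm{tr}(\mathring R^{\,3}) - 2Ru$, with $\mathrm{tr}(\mathring R^{\,3}) = \mathring R_i{}^j\mathring R_j{}^k\mathring R_k{}^i$. Multiplying $f|\nabla\mathring R|^2 = -\langle \nabla f, \nabla u\rangle$ by suitable powers of $f$ and integrating by parts — the boundary terms being controlled because $f$, hence $f^2$ and $\partial_\nu(f^2)$, vanish on $\partial M$, $\partial_\nu f$ is a negative constant there, and $Hess\,f = hg$ on $\partial M$ so that $\nu$ is an eigendirection of $\mathring R$ along $\partial M$ — and feeding in the Bochner formula, the whole matter is reduced to showing $\int_M f|C|^2 \le 0$. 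This is exactly where the work lies: the obstruction is the sign-indefinite cubic term $\int_M \mathrm{tr}(\mathring R^{\,3})$ together with the boundary integral $|\partial_\nu f|\int_{\partial M} u$. To control it I would invoke the sharp algebraic inequality for trace-free symmetric $2$-tensors in dimension three, $6\,\mathrm{tr}(\mathring R^{\,3})^2 \le |\mathring R|^6$ (from the discriminant of the characteristic polynomial), and argue according to the sign of $R$ — the case $R\le 0$ being the one that extends to all dimensions, as in the abstract — so as to force $\int_M f|\nabla \mathring R|^2 = 0$.

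Finally, since $f>0$ on $\mathrm{int}(M)$, the vanishing $\int_M f|\nabla \mathring R|^2 = 0$ gives $\nabla \mathring R \equiv 0$, and because $R$ is constant this is $\nabla Ric \equiv 0$, i.e. $Ric$ is parallel (equivalently $C\equiv 0$, so $(M^3,g)$ is locally conformally flat). Theorem~\ref{thmMT} then yields that $(M^3,g)$ is isometric to a geodesic ball in $\mathbb{R}^3$, $\mathbb{H}^3$ or $\mathbb{S}^3$. The principal obstacle is the integral estimate of the third step: making the dimension-three algebraic identity for $\mathrm{tr}(\mathring R^{\,3})$ interact with the boundary contribution, possibly after splitting on the sign of $R$.
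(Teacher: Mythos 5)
Your preliminary identities are all correct and consistent with the paper's machinery: the $3$D form of Lemma~\ref{L1} (i.e.\ $fC_{ijk}=T_{ijk}$ with $W=0$), the relation $\nabla_iR_{jk}=\tfrac13(C_{ijk}+C_{ikj})$ under $D=0$, the norm identity $|C|^2=3|\nabla Ric|^2$ (this is exactly (\ref{normaD}) with $D=0$), and the pointwise relation $f|\nabla Ric|^2=-\langle\nabla f,\nabla|\mathring{Ric}|^2\rangle$ all check out. But the step you yourself flag as ``the heart of the proof'' is not carried out, and it is a genuine gap, not a routine verification. Integrating your pointwise relation gives
\begin{equation*}
\int_M f|\nabla Ric|^2\,dM_g=-\frac32\int_M|\mathring{Ric}|^2\,dM_g-\frac{R}{2}\int_M f|\mathring{Ric}|^2\,dM_g+\int_{\partial M}|\mathring{Ric}|^2|\nabla f|\,dS,
\end{equation*}
which has terms of both signs, and your proposed remedy --- a Bochner formula for $|\mathring{Ric}|^2$ controlled by the discriminant inequality $6\,\mathrm{tr}(\mathring{Ric}^3)^2\le|\mathring{Ric}|^6$ together with a case split on the sign of $R$ --- is exactly the kind of argument that fails in the case $R>0$, which is the case you cannot discard since $\mathbb{S}^3$ appears in the conclusion. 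You explicitly defer this obstacle rather than resolve it.

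The paper sidesteps the cubic term entirely. It proves two independent integral identities --- one from the divergence identity for $(\mu+f)|D|^2$ (Lemma~\ref{L4}, leading to (\ref{auxintDR3})) and one from integrating the Bochner formula of Proposition~\ref{bochnerAUX} --- each of which contains the same combination $\int_M(\mu+f)(R_{ij}R_{ik}R_{jk}-R_{ijkl}R_{ik}R_{jl})\,dM_g$ with opposite signs. Adding them cancels the cubic and curvature-quadratic terms and yields Theorem~\ref{thmMainC}; for $(\mu,\lambda)=(0,1)$, $n=3$, $W=0$ and $D=0$ this collapses to $0=9\int_M f|\nabla Ric|^2+\tfrac92\int_M|\mathring{Ric}|^2$, a sum of manifestly nonnegative terms, so $g$ is Einstein with no sign analysis on $R$ whatsoever. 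If you want to salvage your route, the missing ingredient is precisely a second integral identity that eliminates $\mathrm{tr}(\mathring{Ric}^3)$ by cancellation rather than by estimation.
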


Based on the previous result, it is natural to ask what occurs in higher dimension. To do so, inspired by the ideas developed in \cite{WW18}, see also \cite{baltazar,balt18}, we shall prove a rigidity result for a $n$-dimensional Miao-Tam critical metric with non-positive scalar curvature and satisfying the cyclic parallel Ricci tensor condition.

\begin{theorem}\label{thmMTnD}
Let $(M^n,\,g,\,f)$ be a Miao-Tam critical metric with non-positive scalar curvature and cyclic parallel Ricci tensor. Then $(M^n,\,g)$ is isometric to a geodesic ball in a simply connected space form $\Bbb{R}^{n}$ or $\Bbb{H}^{n}.$
\end{theorem}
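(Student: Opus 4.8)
The plan is to deduce from the hypotheses that $(M^{n},g)$ is Einstein; once this is established the theorem follows at once from the theorem of Miao and Tam \cite{miaotamTAMS} classifying Einstein Miao--Tam critical metrics as geodesic balls in $\mathbb{R}^{n}$, $\mathbb{H}^{n}$ or $\mathbb{S}^{n}$, the last case being ruled out here since $\mathbb{S}^{n}$ has positive scalar curvature while our $R$ is non-positive. So everything reduces to proving $\mathring{Ric}:=Ric-\tfrac{R}{n}g\equiv 0$.

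First I would collect the standard facts coming from \eqref{eq:criticalspace} with $(\mu,\lambda)=(0,1)$: the scalar curvature $R$ is a non-positive constant, and, writing $\mathring{Hess}\,f:=Hess\,f-\tfrac{\Delta f}{n}g$,
$$\Delta f=-\frac{Rf+n}{n-1},\qquad \mathring{Hess}\,f=f\,\mathring{Ric}.$$
Since $R$ is constant, the contracted second Bianchi identity gives $\operatorname{div}\mathring{Ric}=0$. Next, and this is the place where the hypothesis enters, I would observe that the cyclic parallel Ricci condition $D_{ijk}=0$ forces $\nabla Ric$ to be ``pure Cotton'': because $\nabla_{i}R_{jk}$ is symmetric in $j,k$ and has vanishing cyclic sum, one gets
$$\nabla_{i}R_{jk}=\tfrac13\big(C_{ijk}+C_{ikj}\big),\qquad C_{ijk}:=\nabla_{i}R_{jk}-\nabla_{j}R_{ik},$$
so that $|\nabla Ric|^{2}$ is a fixed positive multiple of $|C|^{2}$, and — crucially — the totally symmetric component of $\nabla Ric$ is zero. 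This last remark is what will make the Weyl cross-terms that normally obstruct the argument in dimension $n\ge4$ disappear.

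The analytic core is an integrated Bochner identity. Differentiating $\mathring{Hess}\,f=f\,\mathring{Ric}$, commuting covariant derivatives via $\nabla_{l}\nabla_{j}\nabla_{k}f-\nabla_{j}\nabla_{l}\nabla_{k}f=-R_{ljkm}\nabla^{m}f$, and using the decomposition of the Riemann tensor, one obtains the well-known Miao--Tam identity of the form $f\,C_{ijk}=W_{ijkl}\nabla^{l}f+\mathcal{T}_{ijk}$, where $\mathcal{T}_{ijk}$ is an explicit tensor built only from $\mathring{Ric}$, $\nabla f$, $g$ and $R$. Contracting this with $C^{ijk}$, integrating over $M$, and integrating by parts using $\operatorname{div}\mathring{Ric}=0$ together with the Bianchi-type identity relating $\operatorname{div}W$ to the Cotton tensor, one is led to an equality of the schematic shape
$$\int_{M} f\,|C|^{2}\,dM \;=\; -\,\alpha(n)\,R\!\int_{M} f\,|\mathring{Ric}|^{2}\,dM \;+\;\int_{M}\big(\text{cubic in }\mathring{Ric}\big)\,dM\;+\;(\text{boundary terms}),$$
with $\alpha(n)>0$; the cyclic parallel Ricci hypothesis is used again here to discard the Weyl contributions. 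The cubic term is then re-expressed by substituting $\mathring{Hess}\,f=f\,\mathring{Ric}$ one more time and integrating by parts (this is the step most sensitive to the precise numerical coefficients), after which the sign assumption $R\le0$ makes the whole right-hand side, apart from the boundary contribution, non-positive, while the left-hand side is non-negative.

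It then remains to control the boundary. On $\partial M$ one has, by Miao and Tam's analysis of critical metrics, $f=0$, $|\nabla f|$ a positive constant, $\nabla f$ normal to $\partial M$, and $\partial M$ totally umbilical with constant mean curvature; plugging these into \eqref{eq:criticalspace} restricted to $\partial M$ lets one evaluate each boundary term as a non-negative multiple of $\int_{\partial M}|\mathring{Ric}|^{2}$ or $\int_{\partial M}|C|^{2}$, so the boundary contribution has the sign compatible with the bulk. Assembling everything forces $C\equiv0$ and $\mathring{Ric}\equiv0$ throughout $M$, i.e.\ $(M^{n},g)$ is Einstein, and we are done by the reduction in the first paragraph. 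I expect the main obstacle to be exactly this bookkeeping: verifying rigorously that under $D_{ijk}=0$ all Weyl-tensor terms cancel in dimension $n\ge4$, and that the several boundary terms assemble with signs consistent with $R\le0$. Isolating ``$\nabla Ric$ is pure Cotton'' and the Miao--Tam boundary data as preliminary lemmas is what should make the computation manageable.
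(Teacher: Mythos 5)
Your overall architecture matches the paper's: derive an integral identity whose terms all acquire a definite sign when $R\le 0$, conclude that the Ricci tensor is parallel (equivalently, Einstein), and finish with the classification of Einstein/parallel-Ricci Miao--Tam metrics, discarding $\mathbb{S}^n$ by the sign of $R$. The reduction in your first paragraph and the observation that $D_{ijk}=0$ makes $\nabla Ric$ ``pure Cotton'' (so $|C|^{2}=3|\nabla Ric|^{2}$, which is (\ref{normaD}) with $D=0$) are both correct. The problem is that the analytic core of your argument is only asserted schematically, and the specific mechanism you propose for it does not work. You want to contract the identity $fC_{ijk}=W_{ijkl}\nabla^{l}f+\mathcal{T}_{ijk}$ with $C^{ijk}$ and claim that ``the cyclic parallel Ricci hypothesis is used again here to discard the Weyl contributions.'' It is not: the offending term is $\int_{M}\langle\iota_{\nabla f}W,C\rangle\,dM_{g}$, a pairing of two tensors skew-symmetric in $(i,j)$, and the condition $D_{ijk}=0$ only kills the totally symmetric part of $\nabla Ric$ --- it gives no control whatsoever over $W_{ijkl}\nabla_{l}fC_{ijk}$. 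Indeed, in the paper this very term is the obstruction that survives in the $n$-dimensional identity of Theorem~\ref{thmMainC} and has to be hypothesized away (inequality (\ref{iWinq})) except when $n=3$, where $W\equiv0$. You flag this cancellation yourself as ``the main obstacle,'' which is an accurate self-assessment: without it your right-hand side is not non-positive and the sign argument collapses.

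The paper's actual proof sidesteps the Weyl tensor entirely. It never contracts the Cotton--Weyl identity with $C$; instead it computes the divergence of the vector field $X_{i}=R_{ik}R_{kj}\nabla_{j}f+R_{ijkl}R_{jk}\nabla_{l}f$ using the full Riemann tensor, the commutation formula (\ref{idRicci}) and Lemma~\ref{L1}, arriving at the pointwise divergence identity of Lemma~\ref{L4} and then, after evaluating the boundary terms (where $f=0$, so only $\int_{\partial M}|\mathring{Ric}|^{2}|\nabla f|\,dS$ survives, with the help of Lemma~\ref{AuxRicci}), at the Weyl-free formula of Theorem~\ref{thmMTintID}:
\begin{equation*}
\frac{1}{3}\int_{M} f|D_{ijk}|^{2}\,dM_{g}=2\int_{M} f|\nabla Ric|^{2}\,dM_{g}-\frac{R}{n-1}\int_{M} f|\mathring{Ric}|^{2}\,dM_{g}+\int_{\partial M}|\mathring{Ric}|^{2}|\nabla f|\,dS ,
\end{equation*}
valid for every Miao--Tam critical metric. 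Setting $D=0$ and using $R\le0$, $f>0$ in the interior makes each term on the right non-negative, hence zero, giving $\nabla Ric=0$ at once; note also that there is no cubic $tr(\mathring{Ric}^{3})$ term left to worry about, whereas in your scheme it is ``re-expressed'' by an integration by parts whose coefficients you have not verified. To repair your write-up you would either have to prove that $\int_{M}\langle\iota_{\nabla f}W,C\rangle\,dM_{g}$ genuinely cancels in your combination (it does not follow from $D=0$), or replace your Bochner step by the divergence computation above.
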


In particular, this shows that the additional assumptions of Theorem 1.1, Theorem 1.2 and Theorem 1.3 in \cite{WW18} are not necessary and then, Theorem~\ref{thmMTnD} improves the results in \cite{WW18} for non-positive scalar curvature case.

\section{Preliminaries}
\label{Preliminaries}
Throughout this section, we recall four special tensors in the study of curvature for a Riemannian manifold $(M^n,\,g),\,n\ge 3.$  The first one is the Weyl tensor $W$ which is defined by the following decomposition formula
\begin{eqnarray}
\label{weyl}
R_{ijkl}&=&W_{ijkl}+\frac{1}{n-2}\big(R_{ik}g_{jl}+R_{jl}g_{ik}-R_{il}g_{jk}-R_{jk}g_{il}\big) \nonumber\\
 &&-\frac{R}{(n-1)(n-2)}\big(g_{jl}g_{ik}-g_{il}g_{jk}\big),
\end{eqnarray}
where $R_{ijkl}$ stands for the Riemann curvature operator $Rm,$ whereas the second one is the Cotton tensor $C$ given by
\begin{equation}
\label{cotton} \displaystyle{C_{ijk}=\nabla_{i}R_{jk}-\nabla_{j}R_{ik}-\frac{1}{2(n-1)}\big(\nabla_{i}R
g_{jk}-\nabla_{j}R g_{ik}).}
\end{equation} Note that $C_{ijk}$ is skew-symmetric in the first two indices and trace-free in any two indices. In addition, it is well known that for $n=3,$ the Weyl tensor vanishes identically, while $C_{ijk}=0$ if and only if $(M^{3},g)$ is locally conformally flat; for $n\geq4$ we have that $W_{ijkl}=0$ if and only if $(M,g)$ is locally conformally flat.

For our purpose we also remember that, as consequence of Bianchi identity, the divergence of the Riemann tensor is given by
\begin{equation}\label{Bianchi}
({\rm div} Rm)_{jkl}=\nabla_kR_{jl}-\nabla_lR_{jk}.
\end{equation}
It is not difficult to see that, these two tensors above are related as follows
\begin{equation}\label{cottondivRm}
\displaystyle{|{\rm div} Rm|^{2}=|C_{ijk}|^{2}+\frac{1}{2(n-1)}|\nabla R|^{2}.}
\end{equation}

We also recall that, from commutation formulas (Ricci identities), for any Riemannian manifold $M^n$ we have
\begin{equation}\label{idRicci}
\nabla_i\nabla_j R_{kl}-\nabla_j\nabla_i R_{kl}=R_{ijks}R_{sl}+R_{ijls}R_{ks},
\end{equation} for more details see \cite{chow}.

In the sequel, in order to unify our results, we will define a critical space as one of the three spaces described in the introduction as follows.
\begin{definition}\label{CriticalSpace:DEF}
A critical space is a 3-tuple $(M^n,\,g,\,f),$ where $(M^{n},\,g),$ $n\geq3,$ is a compact Riemannian manifold with constant scalar curvature which admits a smooth non-constant solution $f$ to the equation
\begin{equation}\label{eq:CS}
-(\Delta f)g+Hess f-fRic=\mu Ric+\lambda g,
\end{equation}
where $Ric,\Delta$ and $Hess$ stand, respectively, for the Ricci tensor, the Laplacian operator and the Hessian form on $M$ and $(\mu,\lambda)$ are two real constants given by $(0,0)$ or $(0,1)$ or $(1,-R/n).$ Moreover, if $(\mu,\lambda)=(0,0)$ or $(0,1),$ the manifold considered here must be compact with non-empty boundary $\partial M$ and its potential function $f$ must be nonnegative and $f^{-1}(0)=\partial M.$ Otherwise, if $(\mu,\lambda)=(1,-R/n),$ the manifold $M$ that we will treat here must be without boundary.
\end{definition}

In particular, tracing (\ref{eq:CS}) we have
\begin{equation}
\label{eqtrace} \Delta f +\frac{(\mu+f)R+\lambda n}{n-1}=0.
\end{equation}
For sake of simplicity, we now rewrite equation (\ref{eq:CS}) in the tensorial language as follows
\begin{equation}\label{eq:tensorialCS}
-(\Delta f)g_{ij}+\nabla_{i}\nabla_{j}f-fR_{ij}=\mu R_{ij}+\lambda g_{ij}.
\end{equation} Moreover, by using (\ref{eqtrace}) we may deduce
\begin{equation}
\label{IdRicHess} (\mu+f)\mathring{Ric}=\mathring{Hess f},
\end{equation} where $\mathring{T}$ stands for the traceless of $T.$

Next, using Equation (\ref{eq:tensorialCS}), it is not difficult to check that the following formula holds on $M.$
\begin{lemma}\label{L1}
Let $(M^n,\,g,\,f,\,\mu,\,\lambda),$ be a critical space. Then we have:
$$(\mu+f)C_{ijk}=R_{ijkl}\nabla_{l}f+\frac{R}{n-1}(\nabla_{i}fg_{jk}-\nabla_{j}fg_{ik})-(\nabla_{i}fR_{jk}-\nabla_{j}f R_{ik}).$$
\end{lemma}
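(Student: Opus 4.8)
The plan is to differentiate the fundamental equation \eqref{eq:tensorialCS} and antisymmetrize. First I would start from $\nabla_i\nabla_j f = (\mu+f)R_{ij} + \big((\Delta f)+\lambda\big)g_{ij}$, which is just \eqref{eq:tensorialCS} rearranged. Apply $\nabla_k$ to both sides: since $\Delta f$ is constant on $M$ (because $R$ is constant, by \eqref{eqtrace}) and $\mu,\lambda$ are constants, the only surviving terms are
$$\nabla_k\nabla_i\nabla_j f = \nabla_k f\, R_{ij} + (\mu+f)\nabla_k R_{ij}.$$
Next I would swap the order of the first two covariant derivatives using the Ricci commutation identity, $\nabla_k\nabla_i\nabla_j f - \nabla_i\nabla_k\nabla_j f = -R_{kijl}\nabla_l f$ (the standard formula for the Hessian of a function), and subtract the version of the displayed identity with $i$ and $k$ interchanged. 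This yields
$$-R_{kijl}\nabla_l f = \nabla_k f\, R_{ij} - \nabla_i f\, R_{kj} + (\mu+f)\big(\nabla_k R_{ij} - \nabla_i R_{kj}\big).$$

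Then I would relabel the indices so the antisymmetrized pair is $(i,j)$ rather than $(k,i)$, rewrite $R_{ijkl}\nabla_l f$ using the symmetries of the curvature tensor, and solve for the combination $\nabla_i R_{jk} - \nabla_j R_{ik}$. Finally I would invoke the definition \eqref{cotton} of the Cotton tensor: since $R$ is constant, $\nabla_i R = 0$, so
$$C_{ijk} = \nabla_i R_{jk} - \nabla_j R_{ik},$$
and substituting the expression obtained above — after multiplying through by $(\mu+f)$ — produces exactly
$$(\mu+f)C_{ijk} = R_{ijkl}\nabla_l f + \frac{R}{n-1}\big(\nabla_i f\, g_{jk} - \nabla_j f\, g_{ik}\big) - \big(\nabla_i f\, R_{jk} - \nabla_j f\, R_{ik}\big).$$
The $\frac{R}{n-1}$ term is the one subtlety: it does not come from the commutation step but from \eqref{IdRicHess} (equivalently, from using \eqref{eqtrace} to replace $\Delta f$ by a multiple of $(\mu+f)R$), so one must carefully track where the trace part of the Hessian enters when the $g_{ij}$ terms are differentiated and antisymmetrized. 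Concretely, writing $\Delta f = -\frac{(\mu+f)R + \lambda n}{n-1}$ and feeding this back in is what converts the naive $\lambda$-dependent coefficient into the clean $\frac{R}{n-1}$ coefficient.

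The main obstacle is purely bookkeeping: keeping the index gymnastics consistent through the commutation identity and the two relabelings, and making sure the constant-scalar-curvature hypothesis is used in both places it is needed (to kill $\nabla\Delta f$ and to identify the Cotton tensor with the bare antisymmetrized Ricci derivative). There is no conceptual difficulty — the identity is a direct consequence of \eqref{eq:tensorialCS}, the Ricci identity for third derivatives of a function, and the constancy of $R$ — so I expect the proof to be short once the indices are arranged correctly.
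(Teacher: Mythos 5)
Your approach is the standard one and is exactly what the paper has in mind (it defers to Lemma 1 of \cite{BDR}, whose proof is precisely this differentiate--commute--antisymmetrize computation). However, your write-up contains an internal inconsistency that makes your two displayed intermediate identities false as stated. You claim that $\Delta f$ is constant on $M$ because $R$ is; but \eqref{eqtrace} gives $\Delta f = -\frac{(\mu+f)R+\lambda n}{n-1}$, which depends on $f$ and is constant only when $R=0$. Consequently, applying $\nabla_k$ to $\nabla_i\nabla_j f=(\mu+f)R_{ij}+(\Delta f+\lambda)g_{ij}$ produces an extra term $\nabla_k(\Delta f)\,g_{ij}=-\frac{R}{n-1}\nabla_k f\,g_{ij}$, so the correct identity is
\begin{equation*}
\nabla_k\nabla_i\nabla_j f=\nabla_k f\,R_{ij}+(\mu+f)\nabla_k R_{ij}-\frac{R}{n-1}\nabla_k f\,g_{ij},
\end{equation*}
and the antisymmetrized version must carry the corresponding $-\frac{R}{n-1}(\nabla_k f\,g_{ij}-\nabla_i f\,g_{kj})$ term. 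This is in fact the \emph{only} source of the $\frac{R}{n-1}$ term in the lemma; your closing remark attributing it to \eqref{eqtrace} is the right diagnosis, but it contradicts your opening assertion and your displayed equations, which omit the term entirely. With that term restored, the rest of your argument (Ricci commutation on the Hessian, relabelling, and the observation that $C_{ijk}=\nabla_iR_{jk}-\nabla_jR_{ik}$ since $R$ is constant) goes through and yields the stated formula, with the sign of $R_{ijkl}\nabla_l f$ fixed by the curvature convention implicit in \eqref{idRicci}.
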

The proof of the previous result is essentially the same as the proof of Lemma 1 in \cite{BDR}.

Now, we recall the following auxiliary $3$-tensor defined in \cite{BDR},
\begin{eqnarray}\label{TensorT}
T_{ijk}&=&\frac{n-1}{n-2}(R_{ik}\nabla_{j}f-R_{jk}\nabla_{i}f)-\frac{R}{n-2}(g_{ik}\nabla_{j}f-g_{jk}\nabla_{i}f)\nonumber\\
&&+\frac{1}{n-2}(g_{ik}R_{js}\nabla_{s}f-g_{jk}R_{is}\nabla_{s}f).
\end{eqnarray}
Note that $T_{ijk}$ has the same symmetry properties as the Cotton tensor and, with a straightforward computation, we verify that
\begin{equation}\label{CTW}
(\mu+f)C_{ijk}=T_{ijk}+W_{ijkl}\nabla_{l}f.
\end{equation}

\section{Key Lemmas}
This section is devoted to quote some key lemmas which will be essential to establish our classification results.

The first result was proved in \cite{BDRR} and provide a integral formula involving the squared norm of the traceless Ricci tensor. In what follows we shall consider $(M^{n},g,f)$, ($n\geq3$), as a $n$-dimensional critical space with smooth boundary $\partial M$ and constant scalar curvature $R.$ Thus, the potential function $f$ satisfies $f>0$ in the interior of $M,$  if $\nu$ denotes the outward unit normal to $\partial M,$ then $\langle\nabla f,\nu\rangle<0$ on each connected component of $\partial M,$ (see for instance, \cite{miaotam}). To be more precise, the outward normal vector field along the boundary must be given by $\nu=-\nabla f/|\nabla f|.$ Since the proof is very short, we include it here for the sake of completeness.

\begin{lemma}\label{AuxRicci}
Let $(M^n,\,g,\,f,\,\mu,\,\lambda),$ be a critical space. If $(\mu,\lambda)=(0,0)$ or $(0,1),$ then we have:
\begin{eqnarray*}
\int_{M} f|\mathring{Ric}|^{2}dM_{g}+\int_{\partial M} \frac{1}{|\nabla f|}\mathring{Ric}(\nabla f, \nabla f)dS=0.
\end{eqnarray*}
\end{lemma}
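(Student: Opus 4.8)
The plan is to establish an integral identity by integrating the relevant quantity over $M$ and using the structure equation \eqref{eq:tensorialCS}. The natural starting point is to contract the traceless Ricci tensor $\mathring{Ric}$ with the traceless Hessian $\mathring{Hess\,f}$; by \eqref{IdRicHess} we have $(\mu+f)\mathring{Ric}=\mathring{Hess\,f}$, and in the cases $(\mu,\lambda)=(0,0)$ or $(0,1)$ we have $\mu=0$, so this reads $f\,\mathring{Ric}=\mathring{Hess\,f}$. Hence $f|\mathring{Ric}|^2=\langle \mathring{Ric},\mathring{Hess\,f}\rangle=\langle \mathring{Ric},Hess\,f\rangle$, where the last equality uses that $\mathring{Ric}$ is trace-free so contracting it against the pure-trace part of $Hess\,f$ vanishes.

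Next I would integrate $\langle \mathring{Ric},Hess\,f\rangle$ over $M$ and integrate by parts. Writing this in coordinates as $\int_M \mathring{R}_{ij}\nabla_i\nabla_j f\,dM$, one application of the divergence theorem gives
\begin{equation*}
\int_M \mathring{R}_{ij}\nabla_i\nabla_j f\,dM = \int_{\partial M}\mathring{R}_{ij}\nabla_j f\,\nu_i\,dS - \int_M \nabla_i\mathring{R}_{ij}\,\nabla_j f\,dM.
\end{equation*}
The divergence term $\nabla_i\mathring{R}_{ij}$ can be handled using the contracted second Bianchi identity $\nabla_i R_{ij}=\tfrac12\nabla_j R$ together with the fact that a critical space has constant scalar curvature, so $\nabla_i\mathring{R}_{ij}=\nabla_i R_{ij}-\tfrac1n\nabla_j R=0$. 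Therefore the interior term vanishes and we are left with the boundary term $\int_{\partial M}\mathring{R}_{ij}\nabla_j f\,\nu_i\,dS$. Using $\nu=-\nabla f/|\nabla f|$ on $\partial M$ (as recalled just before the lemma), this boundary integral becomes $-\int_{\partial M}\tfrac{1}{|\nabla f|}\mathring{Ric}(\nabla f,\nabla f)\,dS$, and combining with $f|\mathring{Ric}|^2=\langle\mathring{Ric},Hess\,f\rangle$ yields exactly the claimed identity.

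I expect the computation itself to be entirely routine; the only genuine points requiring care are (i) justifying that $\mathring{Ric}$ is divergence-free, which hinges on the constant scalar curvature hypothesis built into the definition of a critical space, and (ii) correctly identifying the outward normal as $\nu=-\nabla f/|\nabla f|$ so that the sign of the boundary term comes out right — both of which are supplied in the surrounding text. A subtle but harmless point is that $Hess\,f$ and $\mathring{Hess\,f}$ differ by a multiple of $g$, which drops out against the trace-free $\mathring{Ric}$; one should state this explicitly to keep the bookkeeping clean. Since the statement restricts to $(\mu,\lambda)=(0,0)$ or $(0,1)$, the factor $\mu+f$ simplifies to $f$ and no further case analysis is needed.
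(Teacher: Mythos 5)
Your proposal is correct and follows essentially the same route as the paper: both arguments reduce $f|\mathring{Ric}|^{2}$ to $\mathring{R}_{ij}\nabla_i\nabla_j f$ via the structure equation, use the contracted second Bianchi identity and constant scalar curvature to see that $\mathring{Ric}$ is divergence-free, and then apply the divergence theorem with $\nu=-\nabla f/|\nabla f|$. The only cosmetic difference is that the paper packages the computation as $\nabla_i(\mathring{R}_{ij}\nabla_j f)=f|\mathring{Ric}|^2$ before integrating, whereas you integrate by parts first; the content is identical.
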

\begin{proof}
Since $M$ has constant scalar curvature, we can use the twice contracted second Bianchi identity jointly with Equation (\ref{eq:tensorialCS}) to arrive at
$$\nabla_{i}(\mathring{R}_{ij}\nabla_{j}f)=\mathring{R}_{ij}\nabla_{i}\nabla_{j}f=f|\mathring{Ric}|^{2}.$$

To conclude, we apply the divergence theorem to get
$$\int_{M} f|\mathring{Ric}|^{2}dM_{g}=\int_{\partial M}\mathring{Ric}(\nabla f,\nu)dS,$$
where $\nu$ denotes the outward unit normal to $\partial M.$ This is what we wanted to prove.
\end{proof}

In order to proceed, by direct computation using the definition of tensor $D_{ijk},$ see Eq. (\ref{auxD}), we can deduce that
\begin{eqnarray}\label{nablaRIC}
\frac{1}{3}|D_{ijk}|^{2}&=&D_{ijk}\nabla_{i}R_{jk}\nonumber\\
&=&|\nabla Ric|^{2}+\nabla_{i}R_{jk}\nabla_{j}R_{ki}+\nabla_{i}R_{jk}\nabla_{k}R_{ij}\nonumber\\
&=&|\nabla Ric|^{2}+2\nabla_{i}R_{jk}\nabla_{j}R_{ki},
\end{eqnarray}
where in the last step we just use the symmetry of the Ricci tensor. Furthermore, Eq. (\ref{cotton}) jointly with fact that $(M,g)$ has constant scalar curvature, allow us to conclude that
\begin{equation}\label{Caux}
|C_{ijk}|^{2}=|\nabla_{i}R_{jk}-\nabla_{j}R_{ik}|^{2}=2|\nabla Ric|^{2}-2\nabla_{i}R_{jk}\nabla_{j}R_{ik}.
\end{equation}
Thus, we sum the expressions obtained in (\ref{nablaRIC}) and (\ref{Caux}) to deduce a key property in order to obtain our results, namely
\begin{equation}\label{normaD}
\frac{1}{3}|D_{ijk}|^{2}+|C_{ijk}|^{2}=3|\nabla Ric|^{2}.
\end{equation}

With these considerations in mind, we may announce our second lemma of this section.

\begin{lemma}\label{L4}
Let $(M^n,\,g,\,f,\,\mu,\,\lambda),$ be a critical space. Then we have:
\begin{eqnarray*}
\label{kp}
\frac{1}{3}(\mu+f)|D_{ijk}|^{2}&=&2 (\mu+f)|\nabla Ric|^{2}+\frac{1}{2}{\rm div}((\mu+f)\nabla|Ric|^{2})\\
&&-\nabla_{i}((\mu+f)C_{ijk}R_{jk}+R_{ik}R_{kj}\nabla_{j}f+R_{ijkl}\nabla_{l}f R_{jk}).
\end{eqnarray*}
\end{lemma}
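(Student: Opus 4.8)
The plan is to compute the divergence $\nabla_i\big((\mu+f)C_{ijk}R_{jk}\big)$ in two ways and compare. On one hand, I would expand this using the Leibniz rule, writing it as $(\nabla_i f)C_{ijk}R_{jk} + (\mu+f)(\nabla_i C_{ijk})R_{jk} + (\mu+f)C_{ijk}\nabla_i R_{jk}$. On the other hand, I would substitute the expression for $(\mu+f)C_{ijk}$ from Lemma~\ref{L1}, namely $(\mu+f)C_{ijk}=R_{ijkl}\nabla_l f+\frac{R}{n-1}(\nabla_i f\, g_{jk}-\nabla_j f\, g_{ik})-(\nabla_i f\, R_{jk}-\nabla_j f\, R_{ik})$, into $\nabla_i\big((\mu+f)C_{ijk}R_{jk}\big)$ and differentiate the right-hand side directly. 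Because $R$ is constant and $C_{ijk}$ is trace-free in $j,k$, the middle term of Lemma~\ref{L1} contracts with $R_{jk}$ to something proportional to $R\cdot R/(n-1)$ times $\nabla_i f\, \delta$-type contractions, which should largely collapse; the surviving pieces after contracting with $R_{jk}$ are $R_{ijkl}\nabla_l f\,R_{jk}$ and $-(\nabla_i f\, |Ric|^2 - \nabla_j f\, R_{ik}R_{jk})$, i.e. $-\nabla_i f\,|Ric|^2 + R_{ik}R_{kj}\nabla_j f$ (up to the scalar-curvature term which vanishes or is absorbed). Differentiating, the $\nabla_i(\nabla_i f\,|Ric|^2)$ piece produces $\Delta f\,|Ric|^2 + \nabla_i f\,\nabla_i|Ric|^2$, and I would use the trace equation~\eqref{eqtrace} to rewrite $\Delta f$; however, the cleaner route is to keep things in divergence form and recognize $\tfrac12\,\mathrm{div}((\mu+f)\nabla|Ric|^2)$ emerging from combining the $\nabla_i f\,\nabla_i|Ric|^2$ contribution with the term $(\mu+f)C_{ijk}\nabla_i R_{jk}$ after using the Bianchi-type identity.

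The key algebraic inputs I would assemble are: (i) the identity $\tfrac13|D_{ijk}|^2 = |\nabla Ric|^2 + 2\nabla_i R_{jk}\nabla_j R_{ki}$ from~\eqref{nablaRIC}, which lets me trade the $D$-norm against $|\nabla Ric|^2$ and the mixed term $\nabla_i R_{jk}\nabla_j R_{ki}$; (ii) the contracted second Bianchi identity $\nabla_i R_{ij} = \tfrac12\nabla_j R = 0$, valid since $R$ is constant, which kills several boundary-type terms and simplifies $\nabla_i C_{ijk}$; (iii) the Ricci commutation formula~\eqref{idRicci} to handle the term arising when a derivative lands past the curvature tensor in $\nabla_i(R_{ijkl}\nabla_l f\,R_{jk})$. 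The term $C_{ijk}\nabla_i R_{jk}$ should be re-expressed: since $C_{ijk}=\nabla_i R_{jk}-\nabla_j R_{ik}$ (scalar curvature constant), we get $C_{ijk}\nabla_i R_{jk} = |\nabla Ric|^2 - \nabla_j R_{ik}\nabla_i R_{jk}$, and together with~\eqref{nablaRIC} this is exactly the combination that reconstructs $\tfrac13|D_{ijk}|^2$ after multiplication by $(\mu+f)$ and careful bookkeeping — this is where the coefficient $2$ in front of $(\mu+f)|\nabla Ric|^2$ and the $\tfrac13$ in front of $(\mu+f)|D_{ijk}|^2$ must be made to match.

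I expect the main obstacle to be the careful handling of the two "non-divergence" curvature terms $R_{ik}R_{kj}\nabla_j f$ and $R_{ijkl}\nabla_l f\,R_{jk}$: these are precisely the pieces that do \emph{not} want to close up into a pure divergence, and the lemma's point is that they appear inside $\nabla_i(\cdots)$ together with $(\mu+f)C_{ijk}R_{jk}$. So the real content is showing that, after substituting Lemma~\ref{L1} and differentiating, everything \emph{except} $2(\mu+f)|\nabla Ric|^2 + \tfrac12\mathrm{div}((\mu+f)\nabla|Ric|^2) - \tfrac13(\mu+f)|D_{ijk}|^2$ reorganizes into the single divergence $\nabla_i\big(R_{ik}R_{kj}\nabla_j f + R_{ijkl}\nabla_l f\,R_{jk}\big)$ — i.e. the cross terms generated by differentiating products of $\nabla f$ with curvature must cancel against the cross terms coming from $(\nabla_i f)C_{ijk}R_{jk}$ (expanded via Lemma~\ref{L1} again, now without the $\nabla_i$). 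Once the bookkeeping of these cross terms is verified, rearranging to isolate $\tfrac13(\mu+f)|D_{ijk}|^2$ on the left gives the stated formula.
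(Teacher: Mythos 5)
Your toolkit is the right one (Lemma~\ref{L1}, the Ricci and Bianchi identities, and the algebra \eqref{nablaRIC}--\eqref{normaD}), but the organizing scheme --- computing $\nabla_i\big((\mu+f)C_{ijk}R_{jk}\big)$ in two ways --- cannot by itself yield the lemma, and two of your intermediate claims fail. First, the $\tfrac{R}{n-1}$-term of Lemma~\ref{L1} does not ``collapse'': contracting it with $R_{jk}$ gives $\tfrac{R}{n-1}(R\nabla_i f-R_{ij}\nabla_j f)$, and after applying $\nabla_i$ and using \eqref{eq:tensorialCS} and \eqref{eqtrace}, this piece together with $-\nabla_i(\nabla_i f\,|Ric|^2)$ leaves the residue $\tfrac{n\lambda}{n-1}|\mathring{Ric}|^2$, which has no counterpart in the statement of the lemma. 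Second, your Leibniz expansion contains $(\mu+f)(\nabla_iC_{ijk})R_{jk}$, and $\nabla_iC_{ijk}=\Delta R_{jk}-\nabla_i\nabla_jR_{ik}$ brings in the rough Laplacian via $\Delta R_{jk}\,R_{jk}=\tfrac12\Delta|Ric|^2-|\nabla Ric|^2$, so the divergence $\tfrac12{\rm div}((\mu+f)\nabla|Ric|^2)$ reappears on the ``expanded'' side of your comparison. The net effect is that equating your two expansions yields a relation between ${\rm div}\,X$ and $\tfrac12{\rm div}((\mu+f)\nabla|Ric|^2)$ that differs from the identity of Lemma~\ref{L4} exactly by the Bochner-type formula of Proposition~\ref{bochnerAUX} (your $\tfrac{n\lambda}{n-1}|\mathring{Ric}|^2$ residue is precisely the $\tfrac{n\lambda}{n-1}|\mathring{Ric}|^2$ term there). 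In other words, the two-ways scheme is essentially a tautology that pushes all the content into an independent evaluation of $\tfrac12{\rm div}((\mu+f)\nabla|Ric|^2)$, which you never supply.

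The way out is not to expand $\nabla_iC_{ijk}$ at all. Since $R$ is constant, the two ``$C$-divergences'' in the statement combine into a single one,
$$\tfrac12{\rm div}\big((\mu+f)\nabla|Ric|^2\big)-\nabla_i\big((\mu+f)C_{ijk}R_{jk}\big)=\nabla_j\big((\mu+f)\nabla_iR_{jk}R_{ik}\big),$$
and in the right-hand side the only contracted second derivative is $\nabla_j\nabla_iR_{jk}$, which \eqref{idRicci} together with $\nabla_jR_{jk}=\tfrac12\nabla_kR=0$ reduces to purely algebraic cubic curvature terms; no Laplacian of $Ric$ ever appears. One then computes ${\rm div}\,X$ directly from the definition of $X_i=R_{ik}R_{kj}\nabla_jf+R_{ijkl}\nabla_lfR_{jk}$, using \eqref{eq:tensorialCS} for the Hessian terms, \eqref{Bianchi} for $\nabla_iR_{ijkl}$, and Lemma~\ref{L1} to convert $R_{ijkl}\nabla_lf\,C_{ijk}$ into $(\mu+f)|C_{ijk}|^2$ plus $C_{ijk}R_{jk}\nabla_if$-terms. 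The cubic curvature terms and the $C_{ijk}R_{jk}\nabla_if$-terms in the two expansions coincide, so subtracting gives $(\mu+f)|C_{ijk}|^2=(\mu+f)|\nabla Ric|^2-\nabla_j((\mu+f)\nabla_iR_{jk}R_{ik})-{\rm div}\,X$, and \eqref{normaD} then converts $|C_{ijk}|^2$ into $\tfrac13|D_{ijk}|^2$ and $|\nabla Ric|^2$. That direct computation of ${\rm div}\,X$ is the real engine of the proof, and it is exactly the step your outline leaves unexamined.
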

\begin{proof}
Denote
$$X_{i}=R_{ik}R_{kj}\nabla_{j}f+R_{ijkl}\nabla_{l}f R_{jk}.$$
Then, using (\ref{eq:tensorialCS}) and (\ref{Bianchi}) joint with fact that $M$ has constant scalar curvature, we immediately deduce
\begin{eqnarray*}
{\rm div}X&=& R_{ik}\nabla_{i}R_{kj}\nabla_{j}f+(R_{ik}R_{kj}\nabla_{i}\nabla_{j}f+R_{ijkl}R_{jk}\nabla_{i}\nabla_{l}f)\\
&&\nabla_{i}R_{ijkl}R_{jk}\nabla_{l}f+R_{ijkl}\nabla_{i}R_{jk}\nabla_{l}f\\
&=&2\nabla_{i}R_{jk}R_{ik}\nabla_{j}f+(\mu+f)(R_{ik}R_{kj}R_{ij}-R_{ijkl}R_{ik}R_{jl})\\
&&-\frac{1}{2}\langle \nabla|Ric|^{2},\nabla f\rangle+\frac{1}{2}R_{ijkl}(\nabla_{i}R_{jk}-\nabla_{j}R_{ik})\nabla_{l}f.
\end{eqnarray*}
Here, we change some indices just for simplicity. Hence, by (\ref{cotton}), we have
\begin{eqnarray}\label{EqdivX}
{\rm div}X&=&2C_{ijk}R_{ik}\nabla_{j}f+(\mu+f)(R_{ik}R_{kj}R_{ij}-R_{ijkl}R_{ik}R_{jl})\nonumber\\
&&+\frac{1}{2}\langle \nabla|Ric|^{2},\nabla f\rangle+\frac{1}{2}R_{ijkl}\nabla_{l}fC_{ijk}.
\end{eqnarray}
Next, using Lemma~\ref{L1}, we can rewrite (\ref{EqdivX}) as follows
\begin{eqnarray}\label{EqdivX2}
{\rm div}X-\frac{(\mu+f)}{2}|C_{ijk}|^{2}&=&C_{ijk}R_{ik}\nabla_{j}f+(\mu+f)(R_{ik}R_{kj}R_{ij}-R_{ijkl}R_{ik}R_{jl})\nonumber\\
&&+\frac{1}{2}\langle \nabla|Ric|^{2},\nabla f\rangle.
\end{eqnarray}
On the other hand, a straightforward computation considering the Equations (\ref{Caux}) and (\ref{idRicci}), allow us to conclude that
\begin{eqnarray*}
\frac{(\mu+f)}{2}|C_{ijk}|^{2}&=& (\mu+f)|\nabla Ric|^{2}-(\mu+f)\nabla_{i}R_{jk}\nabla_{j}R_{ik}\\
&=&(\mu+f)|\nabla Ric|^{2}-\nabla_{j}((\mu+f)\nabla_{i}R_{jk}R_{ik})\\
&&+C_{ijk}R_{ik}\nabla_{j}f+\frac{1}{2}\langle\nabla|Ric|^{2},\nabla f\rangle\\
&&+(\mu+f)(R_{ik}R_{kj}R_{ij}-R_{ijkl}R_{ik}R_{jl}),
\end{eqnarray*}
i.e.,
\begin{eqnarray*}
&&\frac{(\mu+f)}{2}|C_{ijk}|^{2}-(\mu+f)|\nabla Ric|^{2}+\nabla_{j}((\mu+f)\nabla_{i}R_{jk}R_{ik})=\\
&=&C_{ijk}R_{ik}\nabla_{j}f+\frac{1}{2}\langle\nabla|Ric|^{2},\nabla f\rangle+(\mu+f)(R_{ik}R_{kj}R_{ij}-R_{ijkl}R_{ik}R_{jl}).
\end{eqnarray*}
Therefore, comparing the expressions obtained in last identity and (\ref{EqdivX2}), we obtain
\begin{eqnarray*}
(\mu+f)|C_{ijk}|^{2}&=&(\mu+f)|\nabla Ric|^{2}-\nabla_{j}((\mu+f)\nabla_{i}R_{jk}R_{ik})\\
&&\nabla_{i}(R_{ik}R_{kj}\nabla_{j}f+R_{ijkl}\nabla_{l}f R_{jk}),
\end{eqnarray*}
which combined with (\ref{normaD}) gives the desired result.
\end{proof}

We conclude this section by recalling the Bochner-type formula obtained by the first author and Ribeiro Jr. in \cite{balt18}, that for our purpose, we write such formula considering an arbitrary critical space.

\begin{proposition}\label{bochnerAUX}
Let $(M^n,\,g,\,f,\,\mu,\,\lambda),$ be a critical space. Then we have:
\begin{eqnarray}
\frac{1}{2}{\rm div}((\mu+f)\nabla|Ric|^{2})&=&(\mu+f)\Big(\frac{n-2}{n-1}|C_{ijk}|^{2}+|\nabla Ric|^{2}\Big)+ \frac{n\lambda}{n-1}|\mathring{Ric}|^{2}\nonumber\\
&&+(\mu+f)\Big(\frac{2}{n-1}R|\mathring{Ric}|^{2}+\frac{2n}{n-2}tr(\mathring{Ric}^{3})\Big)\nonumber\\
&&-\frac{n-2}{n-1}W_{ijkl}\nabla_{l}fC_{ijk}-2(\mu+f)W_{ijkl}R_{ik}R_{jl},
\end{eqnarray} where $C$ stands for the Cotton tensor, $W$ is the Weyl tensor and $\mathring{Ric}$ is the traceless Ricci tensor.
\end{proposition}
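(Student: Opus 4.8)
The statement to prove is Proposition~\ref{bochnerAUX}, a Bochner-type formula for the weighted divergence $\tfrac{1}{2}\mathrm{div}((\mu+f)\nabla|Ric|^2)$. Here is how I would organize the argument.

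\medskip

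\textbf{Overall strategy.} The plan is to expand $\tfrac12\mathrm{div}((\mu+f)\nabla|Ric|^2)$ directly and then repeatedly substitute the critical-space equation \eqref{eq:tensorialCS} to convert third-derivative and Hessian terms into curvature terms, using constancy of $R$ to kill all $\nabla R$ contributions. Concretely, $\tfrac12\mathrm{div}((\mu+f)\nabla|Ric|^2) = \tfrac12\langle\nabla f,\nabla|Ric|^2\rangle + (\mu+f)\big(|\nabla Ric|^2 + R_{jk}\Delta R_{jk}\big)$, where I used $\tfrac12\Delta|Ric|^2 = |\nabla Ric|^2 + R_{jk}\Delta R_{jk}$. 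So the whole problem reduces to a clean identity for $R_{jk}\Delta R_{jk}$ together with handling the first-order term $\tfrac12\langle\nabla f,\nabla|Ric|^2\rangle$.

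\medskip

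\textbf{Key steps, in order.} First I would compute $\Delta R_{jk}$ (the \emph{rough} Laplacian of Ricci) in terms of curvature. Starting from the contracted second Bianchi identity \eqref{Bianchi}, $\nabla_i R_{ijkl} = \nabla_k R_{jl} - \nabla_l R_{jk}$, I take one more divergence, commute derivatives via the Ricci identity \eqref{idRicci}, and use $\nabla R = 0$ to obtain the standard Lichnerowicz-type formula $\Delta R_{jk} = -\text{(quadratic curvature terms)} + \text{(terms involving }\nabla_i R_{ijkl})$; the leftover divergence-of-Riemann term is then re-expressed through the Weyl decomposition \eqref{weyl} so that everything is written with $R_{ijkl}$ replaced by $W_{ijkl}$ plus Ricci/scalar pieces. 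Contracting against $R_{jk}$ produces $R_{jk}\Delta R_{jk}$ as a combination of $\mathrm{tr}(Ric^3)$, $R|Ric|^2$, $R_{ijkl}R_{ik}R_{jl}$ (which I convert to $W_{ijkl}R_{ik}R_{jl}$ plus lower-order Ricci terms), and a term $\nabla_i R_{ijkl}\,R_{jk}$. Second, I handle $\nabla_i R_{ijkl}\,R_{jk}$: using \eqref{Bianchi} it becomes $(\nabla_k R_{jl}-\nabla_l R_{jk})R_{jk}$, which after an integration-by-parts-type manipulation and the Cotton tensor definition \eqref{cotton} relates to $C_{ijk}$; this is where the $\tfrac{n-2}{n-1}|C_{ijk}|^2$ and $W_{ijkl}\nabla_l f\,C_{ijk}$ terms will enter, via Lemma~\ref{L1} (which expresses $(\mu+f)C_{ijk}$ through $R_{ijkl}\nabla_l f$ and Ricci contractions). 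Third, I compute the first-order term $\tfrac12\langle\nabla f,\nabla|Ric|^2\rangle = R_{jk}\nabla_i R_{jk}\nabla_i f$ and, again using the critical-space equation to write $\nabla_i\nabla_j f = (\mu+f)R_{ij} + (\text{trace and }\lambda\text{ terms})\,g_{ij}$, trade $\nabla f$-contractions for the algebraic curvature quantities $\mathrm{tr}(\mathring{Ric}^3)$, $R|\mathring{Ric}|^2$, and $\tfrac{n\lambda}{n-1}|\mathring{Ric}|^2$; converting from $Ric$ to $\mathring{Ric}$ throughout (using $R_{ij} = \mathring{R}_{ij} + \tfrac{R}{n}g_{ij}$ and constancy of $R$) is what makes the $\frac{n\lambda}{n-1}$, $\frac{2}{n-1}R$, and $\frac{2n}{n-2}$ coefficients appear. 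Finally I collect all contributions and match coefficients to obtain the stated identity; cross-checking against Lemma~\ref{L4}, Equation~\eqref{normaD}, and the algebraic identity \eqref{CTW} provides a consistency check on the bookkeeping.

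\medskip

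\textbf{Main obstacle.} The hard part is purely the bookkeeping of index contractions: correctly commuting the two covariant derivatives through the Ricci identity \eqref{idRicci}, and then carefully applying the Weyl decomposition \eqref{weyl} inside the cubic term $R_{ijkl}R_{ik}R_{jl}$ and inside $\nabla_i R_{ijkl}$ so that the final answer is organized with the right $n$-dependent coefficients and with $W$ rather than $Rm$. Keeping the traceless decomposition consistent everywhere — so that the quadratic-in-Ricci terms reassemble exactly into $\tfrac{2}{n-1}R|\mathring{Ric}|^2 + \tfrac{2n}{n-2}\mathrm{tr}(\mathring{Ric}^3)$ and the $\lambda$-term comes out as $\tfrac{n\lambda}{n-1}|\mathring{Ric}|^2$ — is the delicate step; there are no conceptual difficulties beyond this, and the computation is essentially forced once \eqref{eq:tensorialCS}, \eqref{Bianchi}, \eqref{idRicci}, and \eqref{weyl} are combined, as in \cite{balt18}.
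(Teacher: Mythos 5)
The paper does not actually prove Proposition~\ref{bochnerAUX}: it is quoted verbatim from \cite{balt18} (extended from the Miao--Tam case $(\mu,\lambda)=(0,1)$ to a general critical space), so there is no in-paper argument to compare against. Your outline is, in substance, the derivation used in \cite{balt18}: expand $\tfrac12\mathrm{div}((\mu+f)\nabla|Ric|^2)$, use a Lichnerowicz-type formula for $R_{jk}\Delta R_{jk}$ obtained from \eqref{Bianchi} and \eqref{idRicci} with $\nabla R=0$, convert $Rm$-contractions to $W$-contractions via \eqref{weyl}, and use \eqref{eq:tensorialCS} to turn Hessian terms into curvature terms. So the strategy is sound and no step is wrong in principle.

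One point in your plan is imprecise enough that a naive execution would stall, so let me name it. Since the claimed identity is \emph{pointwise}, there is no integration by parts available; the leftover from $R_{jl}\Delta R_{jl}$ is the genuinely second-order term $R_{jl}\nabla_k\big(\nabla_iR_{ijkl}\big)=R_{jl}\nabla_kC_{klj}$ (not the first-order quantity $\nabla_iR_{ijkl}R_{jk}$ you wrote), and the Leibniz rule leaves you with the total divergence $\nabla_k\big((\mu+f)C_{klj}R_{jl}\big)$ plus $-\nabla_kf\,C_{klj}R_{jl}-\tfrac12(\mu+f)|C|^2$. The right-hand side of the Proposition contains no divergence term, so this divergence must be \emph{eliminated}, not merely rewritten: the crucial move is to substitute $(\mu+f)C_{klj}=T_{klj}+W_{kljs}\nabla_sf$ from \eqref{CTW} \emph{before} differentiating, so that $\nabla_k$ falls on an explicit contraction of curvature with $\nabla f$; then \eqref{eq:tensorialCS}, the tracelessness of $W$, and $\mathrm{div}\,W\propto C$ reduce everything to the stated algebraic terms (this is also where $C_{ijk}T_{ijk}=-\tfrac{2(n-1)}{n-2}C_{ijk}R_{jk}\nabla_if$ produces the $\tfrac{n-2}{n-1}$ coefficients and the $\langle\iota_{\nabla f}W,C\rangle$ term). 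You do invoke Lemma~\ref{L1} at roughly the right place, so I read this as an imprecision of exposition rather than a missing idea, but in a written proof you must make the substitute-then-differentiate step explicit; otherwise the argument does not close. The remaining work is the coefficient bookkeeping you already identify as the main labor.
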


\section{Critical Spaces with Cyclic Parallel Ricci Tensor}

\subsection{3D-Critical spaces}

In this section we shall prove the Theorems \ref{thmCPE}, \ref{Static} and \ref{thmMT3D}  announced in the introduction. To do so, we shall present a fundamental integral formula that, when restricted to the three dimensional case, will be able to classify the critical spaces under cyclic parallel Ricci tensor condition.

\begin{theorem}\label{thmMainC}
Let $(M^n,\,g,\,f,\,\mu,\,\lambda),$ $n\geq3$,  be a critical space. Then we have:
\begin{eqnarray*}
\int_{M} (\mu+f)|D_{ijk}|^{2}dM_{g}&=&\frac{18(2n-3)}{3n-5}\int_{M} (\mu+f)|\nabla Ric|^{2}dM_{g}+\frac{9n\lambda}{3n-5}\int_{M} |\mathring{Ric}|^{2}dM_{g}\\
&&+\frac{6(n-1)}{3n-5}\int_{M}\left((\mu+f)\langle {\rm div}D,Ric\rangle-\frac{n-2}{n-1}\langle \iota_{\nabla f}W,C\rangle\right) dM_{g},
\end{eqnarray*}
where $\iota$ stands for the interior product $(\iota_{\nabla f}W)_{ijk}=W_{ijkl}\nabla_{l}f.$
\end{theorem}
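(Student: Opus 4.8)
The plan is to combine the two integral identities already prepared in the excerpt, namely Lemma~\ref{L4} and Proposition~\ref{bochnerAUX}, together with the elementary pointwise identity~\eqref{normaD} relating $|D_{ijk}|^2$, $|C_{ijk}|^2$ and $|\nabla Ric|^2$, and then to integrate over $M$ and kill all boundary terms using the fact that $\mu+f$ vanishes on $\partial M$ in the static/Miao--Tam cases and $\partial M=\varnothing$ in the CPE case. First I would rewrite Lemma~\ref{L4} by replacing the $\frac12\operatorname{div}((\mu+f)\nabla|Ric|^2)$ term using Proposition~\ref{bochnerAUX}; this eliminates the Bochner divergence and produces, after collecting, a pointwise identity expressing $\frac13(\mu+f)|D_{ijk}|^2$ in terms of $(\mu+f)|\nabla Ric|^2$, $(\mu+f)|C_{ijk}|^2$, $\lambda|\mathring{Ric}|^2$, the cubic curvature terms $R|\mathring{Ric}|^2$ and $\operatorname{tr}(\mathring{Ric}^3)$, the Weyl contractions $W_{ijkl}\nabla_l f C_{ijk}$ and $(\mu+f)W_{ijkl}R_{ik}R_{jl}$, and a pure divergence term of the form $\nabla_i((\mu+f)C_{ijk}R_{jk}+X_i)$.

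Next I would integrate this identity over $M$. The divergence term $\nabla_i((\mu+f)C_{ijk}R_{jk}+R_{ik}R_{kj}\nabla_j f+R_{ijkl}\nabla_l f R_{jk})$ integrates to a boundary integral; since $(\mu+f)|_{\partial M}=0$ and each surviving summand carries a factor $\mu+f$ or $\nabla f$ whose behaviour on $\partial M$ is controlled (indeed in the static and Miao--Tam cases $\mu+f=f$ vanishes on $\partial M$, and in the CPE case there is no boundary), this boundary contribution vanishes. The remaining task is purely algebraic bookkeeping: I have one integral identity coming from Lemma~\ref{L4}+Proposition~\ref{bochnerAUX} that still contains the cubic terms $R|\mathring{Ric}|^2$, $\operatorname{tr}(\mathring{Ric}^3)$ and $(\mu+f)W_{ijkl}R_{ik}R_{jl}$, and I need to eliminate them. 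The point is that there is a second way to feed Proposition~\ref{bochnerAUX} into Lemma~\ref{L4}: one can also use~\eqref{normaD} to write $|\nabla Ric|^2=\frac19|D_{ijk}|^2+\frac13|C_{ijk}|^2$ and substitute this back, giving a relation between $\int (\mu+f)|D_{ijk}|^2$ and $\int(\mu+f)|C_{ijk}|^2$ alone (modulo $\lambda$, $\langle\operatorname{div}D,Ric\rangle$ and $\langle\iota_{\nabla f}W,C\rangle$ terms). Solving the resulting $2\times 2$ linear system in the two unknowns $\int(\mu+f)|D_{ijk}|^2$ and $\int(\mu+f)|C_{ijk}|^2$ produces exactly the stated coefficients $\tfrac{18(2n-3)}{3n-5}$, $\tfrac{9n\lambda}{3n-5}$ and $\tfrac{6(n-1)}{3n-5}$; the denominator $3n-5$ is the determinant of that system, which explains its appearance.

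More concretely, the term $\langle\operatorname{div}D,Ric\rangle$ is produced by integrating the piece $\nabla_i((\mu+f)C_{ijk}R_{jk})$ by parts in a way that exposes $\operatorname{div}D$ rather than $\operatorname{div}C$: one uses the definition~\eqref{auxD} of $D$ together with~\eqref{cotton} and the constancy of $R$ to write $D_{ijk}$ and $C_{ijk}$ in terms of $\nabla_i R_{jk}$, so that $\operatorname{div}C$ and $\operatorname{div}D$ differ by controlled curvature contractions; pairing against $Ric$ and using the contracted second Bianchi identity converts the cubic Ricci terms and the $W_{ijkl}R_{ik}R_{jl}$ term into the single expression $(\mu+f)\langle\operatorname{div}D,Ric\rangle$ plus the Weyl--Cotton term $\langle\iota_{\nabla f}W,C\rangle$ (the latter coming from the $W_{ijkl}\nabla_l f C_{ijk}$ terms already visible in Lemma~\ref{L4}'s proof and in Proposition~\ref{bochnerAUX}). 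I expect the main obstacle to be precisely this identification step: tracking the several cubic curvature terms $\operatorname{tr}(\mathring{Ric}^3)$, $R|\mathring{Ric}|^2$ and $W_{ijkl}R_{ik}R_{jl}$ and showing they reassemble, after one further integration by parts and use of the Ricci identity~\eqref{idRicci} and of~\eqref{IdRicHess}, into $(\mu+f)\langle\operatorname{div}D,Ric\rangle$ up to the allowed Weyl term. Once that repackaging is done, the rest is solving a linear system, and the boundary terms are disposed of exactly as in Lemma~\ref{AuxRicci}.
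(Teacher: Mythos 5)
Your overall strategy (combine two independent integral identities and solve for $\int(\mu+f)|D_{ijk}|^{2}$) is in the right spirit, and you correctly identify Proposition~\ref{bochnerAUX} together with (\ref{normaD}) as one of the two ingredients; that is exactly how the paper obtains its identity (\ref{INTDaux}). But your choice of Lemma~\ref{L4} as the other ingredient creates two genuine problems. First, the boundary term does \emph{not} vanish: the divergence $\nabla_{i}(R_{ik}R_{kj}\nabla_{j}f+R_{ijkl}\nabla_{l}fR_{jk})$ inside Lemma~\ref{L4} carries no factor of $\mu+f$, and on $\partial M$ its normal component is computed in the paper (proof of Theorem~\ref{thmMTintID}) to equal $|\mathring{Ric}|^{2}|\nabla f|+\frac{R}{n-1}\frac{1}{|\nabla f|}\mathring{Ric}(\nabla f,\nabla f)$ up to sign, which is generically nonzero. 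Integrating Lemma~\ref{L4} therefore yields Theorem~\ref{thmMTintID} (which has an explicit boundary integral), not a boundary-free identity; asserting that ``this boundary contribution vanishes'' is where your argument breaks in the static and Miao--Tam cases. The paper avoids this entirely by never integrating that divergence: in its proof of Theorem~\ref{thmMainC} every integration by parts has the form $\int Y_{i}\nabla_{i}f\,dM_{g}=-\int(\mu+f)\,{\rm div}Y\,dM_{g}$, so the boundary term always carries the vanishing factor $\mu+f$.

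Second, your mechanism for producing the term $\int(\mu+f)\langle{\rm div}D,Ric\rangle$ does not work as described: $\nabla_{i}\big((\mu+f)C_{ijk}R_{jk}\big)$ is already a total divergence whose integral is zero, so ``integrating it by parts to expose ${\rm div}D$'' cannot generate the required quantity. In the paper this term comes from a separate pointwise identity, (\ref{DRf}), namely $D_{ijk}R_{jk}\nabla_{i}f=\frac{1}{2}\langle\nabla|Ric|^{2},\nabla f\rangle+2\nabla_{j}R_{ki}R_{jk}\nabla_{i}f$. One integrates the left side by parts to get $-\int(\mu+f)\big(\langle{\rm div}D,Ric\rangle+\frac{1}{3}|D_{ijk}|^{2}\big)$, and evaluates the right side using the Ricci identity (\ref{idRicci}), the Cotton tensor, and the $T$-tensor identity (\ref{CTW}) (which is what introduces $(\mu+f)|C_{ijk}|^{2}$ and $\langle\iota_{\nabla f}W,C\rangle$); this produces the identity (\ref{auxintDR3}), whose cubic curvature terms then cancel against those of (\ref{INTDaux}) upon addition. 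Your sketch is missing this chain, which is the real crux of the proof. A small further point: the denominator $3n-5$ is not the determinant of a $2\times2$ system in $\int(\mu+f)|D_{ijk}|^{2}$ and $\int(\mu+f)|C_{ijk}|^{2}$; it arises as $1+3(n-2)$ when the coefficients of $\int(\mu+f)|D_{ijk}|^{2}$ in the two added identities are summed, and the final formula keeps $\int(\mu+f)|\nabla Ric|^{2}$ and $\int(\mu+f)\langle{\rm div}D,Ric\rangle$ as independent quantities rather than eliminating them.
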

\begin{proof}
By direct computation using the definition of tensor $D$, see Equation (\ref{auxD}), we obtain
\begin{equation}\label{DRf}
D_{ijk}R_{jk}\nabla_{i}f=\frac{1}{2}\langle\nabla|Ric|^{2},\nabla f\rangle+2\nabla_{j}R_{ki}R_{jk}\nabla_{i}f.
\end{equation}
Consequently, applying the divergence theorem combined with (\ref{idRicci}), we have that
\begin{eqnarray}\label{auxintDR}
\int_{M}D_{ijk}R_{jk}\nabla_{i}f dM_{g}&=&\frac{1}{2}\int_{M}\langle\nabla|Ric|^{2},\nabla f\rangle dM_{g}-2\int_{M}\nabla_{j}R_{ki}\nabla_{i}R_{jk}(\mu+f)dM_{g}\nonumber\\
&&-2\int_{M}\nabla_{i}\nabla_{j}R_{ki}R_{jk}(\mu+f)dM_{g}\nonumber\\
&=&\frac{1}{2}\int_{M}\langle\nabla|Ric|^{2},\nabla f\rangle dM_{g}-2\int_{M}\nabla_{j}R_{ki}\nabla_{i}R_{jk}(\mu+f)dM_{g}\nonumber\\
&&-2\int_{M}(R_{ij}R_{ik}R_{jk}-R_{ijkl}R_{ik}R_{jl})(\mu+f)dM_{g}.
\end{eqnarray}
On the other hand, from definition of Cotton tensor and Equation (\ref{CTW}) we get another useful identity, namely
\begin{eqnarray*}
2\nabla_{j}R_{ik}R_{jk}\nabla_{i}f&=&2(C_{jik}+\nabla_{i}R_{jk})R_{jk}\nabla_{i}f\\
&=&-C_{ijk}(R_{jk}\nabla_{i}f-R_{ik}\nabla_{j}f)+\langle\nabla|Ric|^{2},\nabla f\rangle\\
&=&\frac{n-2}{n-1}C_{ijk}T_{ijk}+\langle\nabla|Ric|^{2},\nabla f\rangle\\
&=&\frac{n-2}{n-1}((\mu+f)|C_{ijk}|^{2}-\langle \iota_{\nabla f}W,C\rangle)+\langle\nabla|Ric|^{2},\nabla f\rangle
\end{eqnarray*}
which substituting in (\ref{DRf}) joint with (\ref{normaD}) becomes
\begin{eqnarray}\label{Ricnablaf}
\frac{1}{2}\langle\nabla|Ric|^{2},\nabla f\rangle&=&\frac{1}{3}D_{ijk}R_{jk}\nabla_{i}f-\frac{n-2}{(n-1)}(\mu+f)\left(|\nabla Ric|^{2}-\frac{1}{9}|D_{ijk}|^{2}\right)\nonumber\\
&&+\frac{n-2}{3(n-1)}\langle \iota_{\nabla f}W,C\rangle.
\end{eqnarray}
Hence, combining (\ref{nablaRIC}), (\ref{Ricnablaf}) and (\ref{auxintDR}), it is easy to check that
\begin{eqnarray}\label{auxintDR2}
\frac{2}{3}\int_{M}D_{ijk}R_{jk}\nabla_{i}f dM_{g}&=&\int_{M}(\mu+f)\left(\frac{1}{n-1}|\nabla Ric|^{2}-\frac{2n-1}{9(n-1)}|D_{ijk}|^{2}\right)dM_{g}\nonumber\\
&&-2\int_{M}(\mu+f)(R_{ij}R_{ik}R_{jk}-R_{ijkl}R_{ik}R_{jl})dM_{g}\nonumber\\
&&+\frac{n-2}{3(n-1)}\int_{M}\langle \iota_{\nabla f}W,C\rangle dM_{g}.
\end{eqnarray}

For our purpose, integrating by parts on the first term of the left-hand of (\ref{auxintDR2}), we immediately get
\begin{eqnarray}\label{auxintDR3}
\frac{1}{9(n-1)}\int_{M}(\mu+f)|D_{ijk}|^{2}dM_{g} &=&\frac{2}{3}\int_{M}(\mu+f)\langle {\rm div} D,Ric\rangle dM_{g}\nonumber\\
&&+\frac{1}{n-1}\int_{M}(\mu+f)|\nabla Ric|^{2}dM_{g}\nonumber\\
&&-2\int_{M}(\mu+f)(R_{ij}R_{ik}R_{jk}-R_{ijkl}R_{ik}R_{jl})dM_{g}\nonumber\\
&&+\frac{n-2}{3(n-1)}\int_{M}\langle \iota_{\nabla f}W,C\rangle dM_{g}.
\end{eqnarray}

However, using the fact that
$$R_{ij}R_{jk}R_{ik}-R_{ijkl}R_{ik}R_{jl}=\frac{1}{n-1}R|\mathring{Ric}|^{2}+\frac{n}{n-2}tr(\mathring{Ric}^{3})-W_{ijkl}R_{ik}R_{jl},$$
see \cite[Lemma 4]{balt18} for more details, we can take the integral in both side of the expression in Proposition~\ref{bochnerAUX} in order to get another  integral identity, that is,
\begin{eqnarray*}
0&=&\int_{M}(\mu+f)\Big(\frac{n-2}{n-1}|C_{ijk}|^{2}+|\nabla Ric|^{2}\Big)dM_{g}+ \frac{n\lambda}{n-1}\int_{M}|\mathring{Ric}|^{2}dM_{g}\nonumber\\
&&+2\int_{M}(\mu+f)(R_{ij}R_{ik}R_{jk}-R_{ijkl}R_{ik}R_{jl})dM_{g}-\frac{n-2}{n-1}\int_{M}\langle \iota_{\nabla f}W,C\rangle dM_{g}.
\end{eqnarray*}
Such formula can be rewritten using (\ref{normaD}) as follows
\begin{eqnarray}\label{INTDaux}
\frac{n-2}{3(n-1)}\int_{M}(\mu+f)|D_{ijk}|^{2}dM_{g}&=&\frac{4n-7}{n-1}\int_{M}(\mu+f)|\nabla Ric|^{2}dM_{g} +\frac{n\lambda}{n-1}\int_{M}|\mathring{Ric}|^{2}dM_{g}\nonumber\\
&&+2\int_{M}(\mu+f)(R_{ij}R_{ik}R_{jk}-R_{ijkl}R_{ik}R_{jl})dM_{g}\nonumber\\
&&-\frac{n-2}{n-1}\int_{M}\langle \iota_{\nabla f}W,C\rangle dM_{g}.
\end{eqnarray}
Therefore, it is enough to add the expressions obtained in (\ref{auxintDR3}) and (\ref{INTDaux}) to get the desired result.
\end{proof}

As an immediate consequence of the previous theorem we get the following result.

\begin{corollary}
Let $(M^n,\,g,\,f,\,\mu,\,\lambda),$ $n\geq3$,  be a critical space with $D_{ijk}=0.$  Suppose that
\begin{equation}\label{iWinq}
\theta\int_{M}\langle \iota_{\nabla f}W,C\rangle dM_{g}\leq0,
\end{equation}
where $\theta=1$ if $(\mu,\lambda)=(0,0)$ or $(0,1),$ and $\theta=-1$ if $(\mu,\lambda)=(1,-R/n).$ Then the following assertions holds:
\begin{itemize}
\item[(1)] $(M,g)$ has parallel Ricci tensor when $(\mu,\lambda)=(0,0)$;
\item[(2)] $(M,g)$ is an Einstein manifold when $(\mu,\lambda)=(0,1)$ or $(1,-R/n).$
\end{itemize}
In particular, for $n=3,$ $W_{ijkl}$ vanishes identically and the inequality (\ref{iWinq}) is trivially satisfied.
\end{corollary}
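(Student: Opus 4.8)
The plan is to read the corollary off from the master identity of Theorem~\ref{thmMainC}. First I would substitute $D_{ijk}=0$; this also forces ${\rm div}D=0$, so in that identity both the left-hand side and the term carrying $(\mu+f)\langle {\rm div}D,Ric\rangle$ drop out. Multiplying through by $\tfrac{3n-5}{3}>0$, what remains is
\begin{equation*}
6(2n-3)\int_M(\mu+f)|\nabla Ric|^2\,dM_g+3n\lambda\int_M|\mathring{Ric}|^2\,dM_g=2(n-2)\int_M\langle\iota_{\nabla f}W,C\rangle\,dM_g,
\end{equation*}
and here, for $n\ge 3$, the coefficients $6(2n-3)$ of $\int_M(\mu+f)|\nabla Ric|^2$ and $2(n-2)$ of the Weyl term are strictly positive; this positivity is what lets the sign hypothesis \eqref{iWinq} do its job.

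For $(\mu,\lambda)=(0,0)$ or $(0,1)$ we have $\mu=0$, and by the elementary properties of static / Miao--Tam potentials recalled just before Lemma~\ref{AuxRicci} (see also Definition~\ref{CriticalSpace:DEF}) one has $f>0$ in the interior of $M$ and $f=0$ on $\partial M$; hence $\int_M f|\nabla Ric|^2\,dM_g\ge 0$ and $\int_M|\mathring{Ric}|^2\,dM_g\ge 0$, so (since $3n\lambda\ge 0$ in these cases) the left-hand side of the displayed identity is nonnegative. On the other hand, \eqref{iWinq} with $\theta=1$ forces the right-hand side to be nonpositive. Thus both sides vanish. When $(\mu,\lambda)=(0,0)$ this gives $\int_M f|\nabla Ric|^2\,dM_g=0$, whence $\nabla Ric\equiv 0$ on the interior and, by continuity, on all of $M$. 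When $(\mu,\lambda)=(0,1)$ it gives in particular $\int_M|\mathring{Ric}|^2\,dM_g=0$, i.e. $(M,g)$ is Einstein.

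The CPE case $(\mu,\lambda)=(1,-R/n)$ is the one I expect to be the real obstacle: the coefficient of $\int_M|\mathring{Ric}|^2$ is now $3n\lambda=-3R$, of the ``wrong'' sign, and since the CPE potential is not sign-definite the previous monotonicity argument breaks down. The additional ingredient I would bring in is that for CPE the manifold is \emph{closed}, so one may integrate the pointwise identity of Lemma~\ref{L4} over $M$: the two divergence terms there integrate to zero, and with $D_{ijk}=0$ this yields the clean relation $\int_M(1+f)|\nabla Ric|^2\,dM_g=0$. Substituting it back, the displayed identity collapses to $-3R\int_M|\mathring{Ric}|^2\,dM_g=2(n-2)\int_M\langle\iota_{\nabla f}W,C\rangle\,dM_g$. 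Hypothesis \eqref{iWinq} with $\theta=-1$ makes the right-hand side nonnegative, while $R>0$ for a nontrivial CPE metric --- tracing \eqref{eqtrace} gives $\Delta f=-\tfrac{R}{n-1}f$, and pairing this with $f$ yields $\tfrac{R}{n-1}\int_M f^2\,dM_g=\int_M|\nabla f|^2\,dM_g>0$ since $f$ is non-constant. Hence $\int_M|\mathring{Ric}|^2\,dM_g\le 0$, so $\mathring{Ric}\equiv 0$ and $(M,g)$ is Einstein.

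Finally, when $n=3$ the Weyl tensor vanishes identically, so $\iota_{\nabla f}W\equiv 0$ and \eqref{iWinq} holds with equality for both values of $\theta$; hence all three conclusions hold unconditionally in dimension three, which is the form in which the corollary feeds into Theorems~\ref{thmCPE}, \ref{Static} and \ref{thmMT3D}.
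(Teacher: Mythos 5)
Your proof is correct and follows essentially the same route as the paper: substitute $D_{ijk}=0$ into Theorem~\ref{thmMainC}, use the sign of each term for $\theta=1$, and for the CPE case integrate Lemma~\ref{L4} over the closed manifold to kill $\int_M(1+f)|\nabla Ric|^2\,dM_g$ before concluding via $R>0$. The only difference is that you spell out the positivity of $R$ for a nontrivial CPE metric, which the paper leaves implicit.
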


\begin{proof}
Since we are supposing the manifold satisfying the cyclic parallel Ricci tensor condition, it is immediate to apply the Theorem~\ref{thmMainC} to infer
\begin{eqnarray}\label{EqDnull}
0&=&\frac{3(2n-3)}{n-2}\int_{M} (\mu+f)|\nabla Ric|^{2}dM_{g}+\frac{3n\lambda}{2(n-2)}\int_{M} |\mathring{Ric}|^{2}dM_{g}\nonumber\\
&&-\int_{M}\langle \iota_{\nabla f}W,C\rangle dM_{g}.
\end{eqnarray}
Hence, if $\theta=1$ in (\ref{iWinq}), by (\ref{EqDnull}) we have that $M$ has parallel Ricci tensor. In addition, notice that for $(\mu,\lambda)=(0,1),$ we obtain necessarily that $M$ is an Einstein manifold.

In the sequel, considering $\theta=-1,$ which correspond to CPE metrics, we can rewrite the Equation (\ref{EqDnull}) as follows
\begin{eqnarray*}
\frac{3R}{2(n-2)}\int_{M} |\mathring{Ric}|^{2}dM_{g}+\int_{M}\langle \iota_{\nabla f}W,C\rangle dM_{g}=\frac{3(2n-3)}{n-2}\int_{M} (1+f)|\nabla Ric|^{2}dM_{g}.
\end{eqnarray*}
To conclude, it is sufficient to take the integral in both side of the formulae obtained from Lemma~\ref{L4}, in order to arrive at
$$\int_{M} (1+f)|\nabla Ric|^{2}dM_{g}=0.$$
Therefore, returning to the previous integral identity, we get again that $M$ is an Einstein Manifold, as desired.
\end{proof}

Now, our classification results for a $3$-dimensional critical spaces with cyclic parallel Ricci tensor follows directly by previous corollary jointly with the well-known classification of such spaces under parallel Ricci tensor condition, see Theorems \ref{CHYCPE}, \ref{KL} and \ref{thmMT} in the introduction.

\subsection{Miao-Tam critical metrics with $R\leq0$}

In this last section, we focus on the classification of an $n$-dimensional Miao-Tam critical metric with non-positive scalar curvature and satisfying the cyclic parallel Ricci tensor condition. To do this, we will apply the Lemma~\ref{L4} in order to get a new integral identity, which in this case, we will need to provide a specific control over the boundary term. More precisely, we established the following result.

\begin{theorem}\label{thmMTintID}
Let $(M^n,\,g,\,f),$ be a Miao-Tam critical metric. Then we have:
\begin{equation*}
\frac{1}{3}\int_{M} f|D_{ijk}|^{2}dM_{g}=2\int_{M} f|\nabla Ric|^{2}dM_{g}-\frac{R}{n-1}\int_{M} f|\mathring{Ric}|^{2}dM_{g}+\int_{\partial M}|\mathring{Ric}|^{2}|\nabla f|dS.
\end{equation*}
\end{theorem}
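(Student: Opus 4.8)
The plan is to specialize the general machinery of Lemma~\ref{L4} to the Miao-Tam case $(\mu,\lambda)=(0,1)$, where $\mu+f=f$, integrate over $M$, and carefully evaluate the boundary contributions coming from the divergence terms. Concretely, I would start from the identity in Lemma~\ref{L4},
\begin{equation*}
\frac{1}{3}f|D_{ijk}|^{2}=2f|\nabla Ric|^{2}+\frac{1}{2}{\rm div}(f\nabla|Ric|^{2})-\nabla_{i}\big(fC_{ijk}R_{jk}+R_{ik}R_{kj}\nabla_{j}f+R_{ijkl}\nabla_{l}f\,R_{jk}\big),
\end{equation*}
and integrate over $M$. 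The divergence theorem turns the right-hand side into $2\int_M f|\nabla Ric|^2 + \int_{\partial M}\big(\tfrac12\langle\nabla|Ric|^2,\nu\rangle - fC_{ijk}R_{jk}\nu_i - R_{ik}R_{kj}\nabla_j f\,\nu_i - R_{ijkl}\nabla_l f\,R_{jk}\nu_i\big)dS$. Since $f=0$ on $\partial M$, the Cotton-tensor boundary term drops out immediately, and using $\nu=-\nabla f/|\nabla f|$ the remaining three boundary terms can be rewritten purely in terms of the Ricci (and Riemann) tensor contracted with $\nabla f$.

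The key simplification will be to recognize that on $\partial M = f^{-1}(0)$ the potential function has a very rigid structure: since $f$ vanishes on the boundary, $\nabla f$ is normal to $\partial M$, and the critical metric equation \eqref{eq:tensorialCS} evaluated on $\partial M$ (with $f=0$, $\mu=0$, $\lambda=1$) gives $Hess f = (\Delta f)g + g$ there, which forces $\mathring{Hess f}=0$ on the boundary; combined with \eqref{IdRicHess} this is consistent, but more importantly the second fundamental form of $\partial M$ is umbilical and one gets precise control of how $Ric$ acts on $\nabla f$ along $\partial M$. I would use these boundary relations — essentially that $\nabla f$ is an eigendirection-type vector for the relevant tensors on $\partial M$, together with the constancy of $|\nabla f|$ on each boundary component — to collapse $\tfrac12\langle\nabla|Ric|^2,\nu\rangle - R_{ik}R_{kj}\nabla_j f\,\nu_i - R_{ijkl}\nabla_l f\,R_{jk}\nu_i$ into a multiple of $|\mathring{Ric}|^2|\nabla f|$. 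This is where Lemma~\ref{AuxRicci} also enters: comparing the boundary integral produced here with $\int_{\partial M}\tfrac{1}{|\nabla f|}\mathring{Ric}(\nabla f,\nabla f)dS = -\int_M f|\mathring{Ric}|^2$ lets me trade the awkward $\mathring{Ric}(\nabla f,\nabla f)$ boundary term for the bulk term $-\tfrac{R}{n-1}\int_M f|\mathring{Ric}|^2$ after also accounting for the trace part via \eqref{eqtrace}.

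The main obstacle, I expect, is the bookkeeping in the boundary term: one must show that the combination of $\tfrac12 \nabla_\nu|Ric|^2$, the cubic Ricci term, and the Riemann-Ricci term on $\partial M$ assembles exactly into $|\mathring{Ric}|^2|\nabla f|$ with the correct sign and no leftover. This requires knowing the Gauss equation for $\partial M\hookrightarrow M$ together with the umbilicity of the boundary and the fact that $\nabla|\nabla f|$ is itself controlled on $\partial M$ (so that $\nabla_\nu |Ric|^2$ can be expressed through tangential derivatives plus curvature). I would handle it by splitting $Ric$ on $\partial M$ into its $\nabla f$-direction part and its tangential part, writing $\mathring{Ric}(\nu,\nu)$ in terms of $R$ and the tangential Ricci trace, and then checking the algebra dimension-independently. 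Once the boundary integral is identified as $\int_{\partial M}|\mathring{Ric}|^2|\nabla f|dS$, substituting back yields precisely the stated formula.
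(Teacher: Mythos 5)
Your overall strategy --- integrate Lemma~\ref{L4} with $(\mu,\lambda)=(0,1)$, so $\mu+f=f$, and convert the divergence terms into boundary integrals --- is the same as the paper's. But there are two problems with the execution. First, you have dropped a factor of $f$ in the boundary term coming from $\tfrac12\,{\rm div}(f\nabla|Ric|^{2})$: the divergence theorem produces $\tfrac12\int_{\partial M}f\langle\nabla|Ric|^{2},\nu\rangle\,dS$, which vanishes identically because $f=0$ on $\partial M$, not $\tfrac12\int_{\partial M}\langle\nabla|Ric|^{2},\nu\rangle\,dS$. Consequently the ``main obstacle'' you describe --- controlling $\nabla_{\nu}|Ric|^{2}$ on $\partial M$ via the Gauss equation, umbilicity and tangential derivatives --- is a phantom: that term is simply zero, and no boundary geometry is needed for it. The same remark disposes of the Cotton term, as you correctly note.

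Second, and more seriously, you never actually evaluate the one boundary term that survives, namely $-\int_{\partial M}\langle X,\nu\rangle\,dS$ with $X_{i}=R_{ik}R_{kj}\nabla_{j}f+R_{ijkl}\nabla_{l}f\,R_{jk}$. The paper's key observation is that Lemma~\ref{L1}, restricted to $\partial M$ where $\mu+f=0$, degenerates into the purely algebraic identity
$$R_{ijkl}\nabla_{l}f=(\nabla_{i}fR_{jk}-\nabla_{j}fR_{ik})-\frac{R}{n-1}\big(\nabla_{i}fg_{jk}-\nabla_{j}fg_{ik}\big)\quad\text{on }\partial M,$$
which eliminates the Riemann tensor from $\langle X,\nu\rangle$ and collapses the boundary integrand to $-|\mathring{Ric}|^{2}|\nabla f|-\tfrac{R}{n-1}\tfrac{1}{|\nabla f|}\mathring{Ric}(\nabla f,\nabla f)$; Lemma~\ref{AuxRicci} then trades the second piece for the bulk term $\tfrac{R}{n-1}\int_{M}f|\mathring{Ric}|^{2}$, exactly as you anticipated. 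Your proposed substitute --- the Gauss equation together with umbilicity of $\partial M$ --- is not obviously sufficient: umbilicity controls $\mathring{Hess\,f}$ on the boundary, not the contraction of the full Riemann tensor with $Ric$ and $\nabla f$, and you give no computation showing that the cubic Ricci term and the Riemann--Ricci term assemble into $|\mathring{Ric}|^{2}|\nabla f|$ with no leftover. As written, the crucial boundary identification is asserted rather than proved, so the argument has a genuine gap precisely at the step that carries all the content of the theorem.
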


\begin{proof} First of all,  considering the vector field $X$ whose the component functions are given by
$$X_{i}=R_{ik}R_{kj}\nabla_{j}f +R_{ijkl}\nabla_{l}f R_{jk},$$
we may apply the divergence theorem to infer
\begin{eqnarray}\label{auxDIVX}
\int_{M} {\rm div} XdM_{g}&=&\int_{\partial M} -R_{ik}R_{kj}\nabla_{j}f\frac{\nabla_{i} f}{|\nabla f|} - R_{ijkl}\nabla_{l}f R_{jk}\frac{\nabla_{i} f}{|\nabla f|}dS\nonumber\\
&=&\int_{\partial M}\frac{1}{|\nabla f|}[-|Ric(\nabla f)|^{2}- R_{ijkl}\nabla_{l}f R_{jk}\nabla_{i} f]dS.
 \end{eqnarray}
Taking into account that $f$ vanishing on $\partial M$, we substitute Lemma~\ref{L1} in (\ref{auxDIVX}) to arrive at
\begin{eqnarray*}
\int_{M} {\rm div} XdM_{g}&=&\int_{\partial M}\frac{1}{|\nabla f|}\Big[-|Ric(\nabla f)|^{2}+\frac{R}{n-1}(\nabla_{i}fg_{jk}-\nabla_{j}fg_{ik})R_{jk}\nabla_{i} f\\
&&-(\nabla_{i}fR_{jk}-\nabla_{j}fR_{ik})R_{jk}\nabla_{i} f\Big]dS\\
&=&\int_{\partial M}\frac{1}{|\nabla f|}\Big[ -|Ric|^{2}|\nabla f|^{2}+\frac{R^{2}}{n-1}|\nabla f|^{2}-\frac{R}{n-1}Ric(\nabla f,\nabla f)\Big]dS,
 \end{eqnarray*}
which can be rewritten as
\begin{eqnarray*}
\int_{M} {\rm div} XdM_{g}&=&-\int_{\partial M}|\mathring{Ric}|^{2}|\nabla f|dS -\frac{R}{n-1}\int_{\partial M}\frac{1}{|\nabla f|}\mathring{Ric}(\nabla f,\nabla f)dS.
 \end{eqnarray*}
Next, using Lemma~\ref{AuxRicci} it is easy to check that
\begin{eqnarray*}
\int_{M} {\rm div} XdM_{g}&=&-\int_{\partial M}|\mathring{Ric}|^{2}|\nabla f|dS +\frac{R}{n-1}\int_{M}f|\mathring{Ric}|^{2}dS,
 \end{eqnarray*}
i.e.,
\begin{eqnarray}\label{eq:divX}
\int_{M}\nabla_{i}(R_{ik}R_{kj}\nabla_{j}f+R_{ijkl}\nabla_{l}f R_{jk})dM_{g}&=&-\int_{\partial M}|\mathring{Ric}|^{2}|\nabla f|dS\nonumber\\ &&+\frac{R}{n-1}\int_{M}f|\mathring{Ric}|^{2}dS.
\end{eqnarray}

Finally, the Equation (\ref{eq:divX}) replaced in Lemma~\ref{L4} provides the requested result.
\end{proof}

\subsection{Conclusion of the proof of Theorem \ref{thmMTnD}}
\begin{proof}

Taking into account that $M$ has cyclic parallel Ricci tensor, it suffices to invoke Theorem \ref{thmMTintID} to obtain
$$2\int_{M} f|\nabla Ric|^{2}dM_{g}-\frac{R}{n-1}\int_{M} f|\mathring{Ric}|^{2}dM_{g}+\int_{\partial M}|\mathring{Ric}|^{2}|\nabla f|dS=0.$$
Therefore, as we are considering the non-positive scalar curvature case, it is immediate to verify using the above integral identity, that $M^n$ has parallel Ricci tensor. Then, we are in position to use Theorem \ref{thmMT} (see also Theorem 1.1 of  \cite{miaotamTAMS}) to conclude that $(M^n ,\,g)$ is isometric to a geodesic ball in a simply connected space form $\Bbb{R}^{n}$ or $\Bbb{H}^{n}.$  So, the proof is completed.
\end{proof}

\begin{acknowledgement}
The authors would like to thank Rondinelle Batista, Wilson Cunha, Manoel Vieira and Kelton Bezerra for helpful discussions about this subject.
\end{acknowledgement}

\end{document}